\def\@currentlabel{2.1}\label{e:dispaa}
\def\@currentlabel{2.21}\label{e:dispau}
\def\@currentlabel{2.22}\label{e:dispav}
\def\@currentlabel{2.23}\label{e:dispaw}
\def\@currentlabel{2.24}\label{e:dispax}
\def\theequation{\thesection.\@arabic\c@equation}
\newcommand{\R}{\mathbb{R}}
\newcommand{\eps}{\varepsilon}
\renewcommand{\theequation}{\thesection.\arabic{equation}}
\newtheorem{theorem}{Theorem}
\newtheorem{proposition}[theorem]{Proposition}
\newtheorem{lemma}[theorem]{Lemma}
\newtheorem{corollary}[theorem]{Corollary}
\newtheorem{remark}[theorem]{Remark}
\newtheorem{question}[theorem]{Question}
\begin{document}

\title{Radial and cylindrical symmetry of solutions to the Cahn-Hilliard equation}

\author{Matteo Rizzi}
\address{Centro
de Modelamiento Matem\'atico (UMI 2807 CNRS), Universidad de Chile, Casilla
170 Correo 3, Santiago, Chile.}
\email{mrizzi1988@gmail.com}
\thanks{The author was partially supported by Fondecyt postdoctoral research grant 3170111 and Fondo Basal AFB170001 CMM-Chile. Moreover, the author is particularly grateful to his PhD advisor Alberto Farina for his crucial comments and remarks about this paper}

\begin{abstract}
The paper is devoted to the classification of entire solutions to the Cahn-Hilliard equation $-\Delta u=u-u^3-\delta$ in $\R^N$, with particular interest in those solutions whose nodal set is either bounded or contained in a cylinder. The aim is to prove either radial or cylindrical symmetry, under suitable hypothesis.
\end{abstract}

\maketitle

\textbf{AMS classification: 35B10, 35B06}

\section{Introduction}

\setcounter{equation}{0}

We consider the entire equation
\begin{eqnarray}
-\Delta u=f(u)-\delta &\text{in $\R^N$,}\label{cahn-hilliard}
\end{eqnarray}
with $f(u):=u-u^3$ and $\delta\in\R$. This equation has a variational characterisation, indeed, if we consider it on a domain $\Omega\subset\R^N$, it arises as the Euler equation of the Ginzburg-Landau functional
\begin{equation}
E(u,\Omega):=\int_\Omega \bigg(\frac{1}{2}|\nabla u|^2+W(u)\bigg) dx, \qquad W(t):=\frac{(1-t^2)^2}{4},\label{def_energy}
\end{equation} 
under the mass constraint
\begin{eqnarray}
\frac{1}{|\Omega|}\int_\Omega u dx=m, \qquad m\in(-1,1),\label{mass_constraint}
\end{eqnarray}
which gives rise to the Lagrange multiplier $\delta$. The interest in the minimisers $u$ of $E(\cdotp,\Omega)$ arises from the phase transitions theory. In other words, if two different fluids are mixed in a container $\Omega$, the number $u(x)$ represents the density of one of the two at $x$, in an equilibrium configuration. Here we take $\delta\in(-\frac{2}{3\sqrt{3}},\frac{2}{3\sqrt{3}})$, so that the polynomial $f_\delta(t):=t-t^3-\delta$ admits exactly $3$ real roots $$z_1(\delta)<-1/\sqrt{3}<z_2(\delta)<1/\sqrt{3}<z_3(\delta),$$ 
with $z_2(\delta)$ satisfying $\delta z_2(\delta)\ge 0$. The main results of the paper deal with symmetry properties of entire solutions to the Cahn-Hilliard equation (\ref{cahn-hilliard}). 
\begin{theorem}
Let $N\ge 2$, $\delta\in(-\frac{2}{3\sqrt{3}},\frac{2}{3\sqrt{3}})$ and let $u_\delta$ be a solution to (\ref{cahn-hilliard}) such that
\begin{equation}
u_\delta>z_2(\delta) \qquad \text{outside a ball $B_R\subset\R^N$.}\label{u_pos_ball} 
\end{equation}
\begin{enumerate}
\item If $\delta\in(-\frac{2}{3\sqrt{3}},0]$, then $u\equiv z_3(\delta)$. \label{const_case}
\item If $\delta\in(0,\frac{2}{3\sqrt{3}})$, then $u_\delta$ is radially symmetric (not necessarily constant).\label{rad_case}
\end{enumerate}
\label{th_global_radial}
\end{theorem}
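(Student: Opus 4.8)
The plan is to deduce both parts from the single asymptotic statement that $u_\delta(x)\to z_3(\delta)$ as $|x|\to\infty$, with $|u_\delta-z_3(\delta)|$ and $|\nabla u_\delta|$ decaying exponentially; call this property $(\star)$. Before addressing $(\star)$ I would record two preliminary facts. First, $u_\delta$ is bounded: on the exterior region $\{u_\delta>z_2\}$ the equation reads $-\Delta u_\delta=f_\delta(u_\delta)$ with $f_\delta(t)\to-\infty$ as $t\to+\infty$, so where $u_\delta$ is large it is superharmonic and a Keller--Osserman/Brezis type absorption estimate (exploiting the $-u_\delta^3$ term) excludes blow-up; if boundedness is part of the standing hypotheses this step is unnecessary. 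Second, $z_1(\delta)\le u_\delta\le z_3(\delta)$ on $\R^N$: translating along sequences realising $\sup u_\delta$ and $\inf u_\delta$ and using interior elliptic estimates produces an entire solution $u_\infty$ with an interior global extremum at the origin, and the sign of $f_\delta$ at that extremal value would be wrong unless the value lies in $[z_1,z_3]$.

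\emph{Proof of $(\star)$ (the main point).} Set $\mu:=\liminf_{|x|\to\infty}u_\delta\in[z_2,z_3]$. Blowing up along $x_n\to\infty$ with $u_\delta(x_n)\to\mu$ gives an entire solution $u_\infty$ with $u_\infty\ge\mu$ and $u_\infty(0)=\mu$; the interior-minimum inequality yields $f_\delta(\mu)\le0$, and since $f_\delta>0$ on $(z_2,z_3)$ this forces $\mu\in\{z_2,z_3\}$. The case $\mu=z_2$ must be excluded. If it held, then $w:=u_\infty-z_2\ge0$ would vanish at $0$ and satisfy $-\Delta w+Lw\ge0$ ($L$ a Lipschitz constant for $f_\delta$), hence $w\equiv0$ by the strong maximum principle, so $u_\delta(\,\cdot\,+x_n)\to z_2$ locally uniformly. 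Fix a radius $\rho$ with $\lambda_1(B_\rho)<c_0:=\tfrac12 f_\delta'(z_2)=\tfrac12\bigl(1-3z_2^2\bigr)>0$ ($\lambda_1$ the first Dirichlet eigenvalue of $-\Delta$), and $\eta>0$ with $f_\delta(z_2+s)\ge c_0 s$ for $0\le s\le\eta$. For $n$ large, $|x_n|>R+\rho$ and $z_2<u_\delta<z_2+\eta$ on $\overline{B_\rho(x_n)}$, so $w=u_\delta-z_2>0$ satisfies $-\Delta w\ge c_0 w$ there; testing against the positive first Dirichlet eigenfunction $\varphi$ of $B_\rho(x_n)$ and integrating by parts (the boundary term strictly negative by Hopf's lemma, since $w>0$ on $\partial B_\rho(x_n)$) gives $0\le(\lambda_1(B_\rho)-c_0)\int w\varphi+(\text{negative})<0$, a contradiction. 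Hence $\mu=z_3$, and with $u_\delta\le z_3$ this yields $u_\delta\to z_3$; the exponential rate follows by linearising $v:=z_3-u_\delta$ at $0$ (note $g'(0)=f_\delta'(z_3)=1-3z_3^2<0$, where $-\Delta v=g(v)$) and comparing with a radial exponential barrier outside a large ball, interior elliptic estimates then giving the decay of $\nabla u_\delta$. The delicate point of the whole argument is precisely this exclusion of $\mu=z_2$: solutions approaching the linearly unstable constant $z_2$ at infinity do exist in general, and only the hypothesis $u_\delta>z_2$ outside $B_R$ together with $f_\delta'(z_2)>0$ forbids it.

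\emph{Proof of part (1), $\delta\le0$.} Here I would use a Pohozaev identity. Since $f_\delta(z_3)=0$ and $F_\delta=\tfrac14-W_\delta$ with $W_\delta(t)=W(t)+\delta t$, one checks that $z_3$ is the strict global minimiser of $W_\delta$ when $\delta<0$, whereas $W_0$ has exactly the two global minimisers $\pm1$. Multiplying the equation by $x\cdot\nabla u_\delta$, integrating over $B_T$ and letting $T\to\infty$ (all boundary terms vanish by the exponential decay in $(\star)$) yields
\[
\frac{N-2}{2}\int_{\R^N}|\nabla u_\delta|^2\,dx=-N\int_{\R^N}\bigl(W_\delta(u_\delta)-W_\delta(z_3)\bigr)\,dx,
\]
so $\int_{\R^N}\bigl(W_\delta(u_\delta)-W_\delta(z_3)\bigr)\le0$. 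But this integrand is everywhere $\ge0$, hence vanishes identically: for $\delta<0$ this forces $u_\delta\equiv z_3$, and for $\delta=0$ it forces $u_0(x)\in\{-1,1\}$ for every $x$, whence $u_0\equiv1=z_3$ by continuity and the hypothesis $u_0>z_2=0$ outside $B_R$.

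\emph{Proof of part (2), $\delta>0$.} If $u_\delta$ attains the value $z_3$ anywhere, the strong maximum principle applied to $z_3-u_\delta\ge0$ gives $u_\delta\equiv z_3$, which is radial; otherwise $v=z_3-u_\delta>0$ solves $-\Delta v=g(v)$ on $\R^N$ with $g(0)=0$, $g$ locally Lipschitz and non-increasing near $0$ (since $g'(0)=f_\delta'(z_3)<0$), and $v\to0$ exponentially by $(\star)$. These are exactly the hypotheses that make the method of moving planes "from infinity" work (in the spirit of Gidas--Ni--Nirenberg, Berestycki--Nirenberg, Li, Li--Ni): the exponential decay allows the planes to be started at infinity, and although $g$ fails to be globally monotone, the region $\{u_\delta<1/\sqrt3\}$ where monotonicity breaks is bounded, so the sweeping can be carried to the end. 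One concludes that $v$, hence $u_\delta$, is radially symmetric (and radially increasing) about some point of $\R^N$, which is the assertion of part (2).
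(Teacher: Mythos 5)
Your proposal is correct, but it reaches the theorem by a partly different route than the paper, and the comparison is worth recording. For the decay $u_\delta\to z_3(\delta)$ at infinity, the paper simply applies Lemma $2.3$ of \cite{F1} to $w=u_\delta-z_2(\delta)$ in a half-space (Lemma \ref{lemma_dec_x'}); your blow-up argument (a limit solution attaining the $\liminf$, the exclusion of the value $z_2(\delta)$ by testing $-\Delta w\ge c_0 w$ against the first Dirichlet eigenfunction of a large ball, using $f_\delta>0$ on $(z_2(\delta),z_3(\delta))$ and $f'(z_2(\delta))>0$) is a self-contained re-proof of essentially the same fact, and it is sound. Part (2) is then the same in spirit as the paper: boundedness $z_1(\delta)\le u_\delta\le z_3(\delta)$ (the paper gets it from the Brezis lemma \ref{lemma_BK} even for $L^3_{loc}$ distributional solutions, while your Keller--Osserman plus translation sketch works for classical solutions) together with uniform decay, followed by the symmetry result of \cite{GNN} or Theorem $2$ of \cite{FMR}; your remark that ``the region where $g$ is not monotone is bounded, so the sweeping can be carried to the end'' is a gloss rather than a proof, but since you verify the actual hypotheses of those theorems ($v=z_3(\delta)-u_\delta>0$ bounded, $v\to0$ uniformly, $g$ non-increasing near $0$), there is no gap. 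The genuine divergence is part (1): the paper deduces it from Farina's Liouville-type Theorem $4.2$ of \cite{F1}, after checking the sign conditions $f_\delta\ge0$ on $(0,z_3(\delta))$, $f_\delta+f_{-\delta}\ge0$ and monotonicity of $f_\delta$ near $z_3(\delta)$, whereas you use a Pohozaev identity, killing the boundary terms with the exponential decay of $z_3(\delta)-u_\delta$ and $\nabla u_\delta$ and exploiting that $z_3(\delta)$ is the global minimiser of $W_\delta$ for $\delta\le0$; your computation is correct, including the case $N=2$ (where only the potential term is used) and $\delta=0$ (where $W(u_0)\equiv0$, continuity and $u_0>0$ outside $B_R$ select the constant $1$). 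What each approach buys: the paper's route needs no decay rate and actually yields the stronger Proposition \ref{prop_delta>0}, in which $u_\delta>z_2(\delta)$ is assumed only outside a stripe, a setting where no integrability over $\R^N$ is available and your Pohozaev argument would not apply; your route is more elementary and self-contained, at the price of using the full radial decay and hence only covering the ball hypothesis of the theorem as stated.
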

We note that, for $\delta>0$, nontrivial bubble solutions are known to exist. This is an important difference with the case $\delta\le 0$. Moreover, we will see that the zero level set of radial solutions is non empty. In particular, we have the following Corollary. 
\begin{corollary}
Let $\delta\in (0,\frac{2}{3\sqrt{3}})$ and let $u_\delta$ be a non constant solution to (\ref{cahn-hilliard}) such that $u_\delta>z_2(\delta)$ outside a ball $B_R$. Then the nodal set of $u_\delta$ is a sphere.
\label{cor_level_set}
\end{corollary}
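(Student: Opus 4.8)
The idea is to use the symmetry already available to reduce to a one-dimensional problem, and then to show that the radial profile has a single, transversal sign change. By part~(\ref{rad_case}) of Theorem~\ref{th_global_radial}, $u_\delta$ is radially symmetric; translating its centre to the origin, write $u_\delta(x)=v(|x|)$ with $v\in C^2([0,\infty))$, $v'(0)=0$ and $v$ non-constant, solving
\[
v''+\tfrac{N-1}{r}\,v'+f_\delta(v)=0\quad\text{on }(0,\infty),\qquad f_\delta(t)=t-t^3-\delta .
\]
From $f_\delta(z_2(\delta))=0$ we get $z_2(\delta)\bigl(1-z_2(\delta)^2\bigr)=\delta>0$ with $1-z_2(\delta)^2>2/3$, so $z_2(\delta)>0$; hence by (\ref{u_pos_ball}) the nodal set $\{u_\delta=0\}$ is contained in $\overline{B_R}$ and, being radial, equals $\bigcup\{\,|x|=r:\ v(r)=0\,\}$, a union of concentric spheres (together with the point $\{0\}$ if $v(0)=0$). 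Thus it suffices to prove that $v$ vanishes at exactly one $r_0>0$.

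For the global behaviour of $v$ I would use the Hamiltonian $H(r):=\tfrac12 v'(r)^2+F_\delta(v(r))$, $F_\delta(t):=\tfrac{t^2}{2}-\tfrac{t^4}{4}-\delta t$, which satisfies $H'(r)=-\tfrac{N-1}{r}\,v'(r)^2\le 0$; so $H$ is non-increasing and, as $v$ is non-constant, strictly decreasing. Standard ODE arguments then give that $v$ is bounded, hence $v(r)\to L$ and $v'(r)\to 0$ as $r\to\infty$ with $f_\delta(L)=0$; by (\ref{u_pos_ball}), $L\ge z_2(\delta)$, and since the radial linearisation at $z_2(\delta)$ is oscillatory ($f_\delta'(z_2(\delta))=1-3z_2(\delta)^2>0$), $L\ne z_2(\delta)$, so $L=z_3(\delta)$. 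A maximum-principle argument on the radial set $\{v>z_3(\delta)\}$ — where $u_\delta$ is subharmonic and equals $z_3(\delta)$ on the boundary or at infinity — shows $v<z_3(\delta)$ everywhere. Finally, applying to the positive function $z_3(\delta)-u_\delta$, which decays to $0$ and solves an equation whose nonlinearity is decreasing near $0$, the symmetry argument underlying Theorem~\ref{th_global_radial}(\ref{rad_case}) gives that $z_3(\delta)-u_\delta$ is radially decreasing, i.e.\ $v$ is \emph{strictly increasing} on $(0,\infty)$; in particular $v$ has at most one zero, lying in $(0,R]$ if and only if $v(0)<0$.

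It remains to show $v(0)<0$, i.e.\ that a non-constant radial solution has non-empty nodal set — the fact announced just before the statement. If $v(0)\ge 0$, then $v\ge v(0)\ge 0$ everywhere and, evaluating $-\Delta u_\delta=f_\delta(u_\delta)$ at the origin (the minimum point of $u_\delta$, by monotonicity), $-N v''(0)=f_\delta(v(0))$ with $v''(0)\ge 0$; hence $f_\delta(v(0))\le 0$ and, since $v(0)<z_3(\delta)$, $v(0)\in[0,z_2(\delta)]$. The value $v(0)=z_2(\delta)$ is impossible by uniqueness for the radial Cauchy problem (it would force $v\equiv z_2(\delta)$). Excluding $v(0)\in[0,z_2(\delta))$ is the genuinely delicate point, and the step I expect to be the main obstacle: one compares $H(0)=F_\delta(v(0))\le 0$ with $H(\infty)=F_\delta(z_3(\delta))$, using that $H$ is strictly decreasing together with the precise shape of $F_\delta$ on $[0,z_3(\delta)]$ (and, if needed, the behaviour of $v$ beyond the radius where it first reaches $z_2(\delta)$) — this is exactly the announced non-emptiness statement. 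Once $v(0)<0<z_3(\delta)=\sup_{\R^N}u_\delta$ is known, the strictly increasing $v$ has a unique zero $r_0\in(0,R]$, so $\{u_\delta=0\}=\partial B_{r_0}$ is a sphere, as claimed.
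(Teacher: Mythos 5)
Your plan follows essentially the same route as the paper: radial symmetry plus strict radial monotonicity (Proposition \ref{prop_rad_global}, or equivalently the identification with the ground state $v_\delta$ of (\ref{prob_rad}) in Proposition \ref{prop_uniqueness}) reduce the corollary to showing that the radial profile is negative at its centre, and the paper closes exactly this point in Remark \ref{remark_w_delta_0<0}, using the same Hamiltonian you introduce (your $H(r)=\tfrac12 v'(r)^2+F_\delta(v(r))$ is the paper's $E_\delta$ up to the additive constant $\tfrac14$, since $F_\delta=\tfrac14-W_\delta$). The problem is that you leave precisely this step unproven: you yourself flag ``excluding $v(0)\in[0,z_2(\delta))$'' as the main obstacle and only gesture at a comparison of $H(0)$ with $H(\infty)$. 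Since the non-emptiness of the nodal set is the whole content of the corollary beyond Theorem \ref{th_global_radial}, this is a genuine gap, not a routine detail. (The surrounding steps are fine and match the paper: $z_2(\delta)>0$, containment of the nodal set in $\overline{B_R}$, $v<z_3(\delta)$, strict monotonicity of $v$, uniqueness of the zero radius; the convergence $v\to z_3(\delta)$, which you obtain by ``standard ODE arguments'', is in the paper a consequence of Lemma \ref{lemma_dec_x'}.)

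Be aware also that the comparison you sketch does not close the gap by itself. Monotonicity of $H$ gives $F_\delta(v(0))=H(0)>\lim_{r\to\infty}H(r)=F_\delta(z_3(\delta))$, and to deduce $v(0)<0$ from this (using that $F_\delta$ is strictly decreasing on $(z_1(\delta),z_2(\delta))$, which contains both $v(0)$ and $0$) one needs $F_\delta(z_3(\delta))\ge F_\delta(0)=0$, i.e. $W_\delta(z_3(\delta))\le\tfrac14$. This is true for $\delta$ small, but fails as $\delta$ approaches $\tfrac{2}{3\sqrt3}$: there $z_3(\delta)\to\tfrac1{\sqrt3}$ and $W_\delta(z_3(\delta))\to\tfrac19+\tfrac29=\tfrac13>\tfrac14$, so the energy inequality alone is compatible with $v(0)\ge 0$, and some further information about the ground state for $\delta$ not small is required. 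Note that the paper's own Remark \ref{remark_w_delta_0<0} asserts $E_\delta(0)>0$, i.e. $W_\delta(w_\delta(0))<0$, which is stronger than what the strict decrease of $E_\delta$ yields (namely $E_\delta(0)>-W_\delta(z_3(\delta))$, a negative lower bound when $\delta>0$); so if your intention is to reproduce that argument, you should realize that the delicate point you identified is delicate there too, and an honest completion of your proof must either restrict the range of $\delta$ or supply the missing estimate on $v(0)$ by other means.
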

This result agrees with the variational theory, which studies the asymptotic behaviour of the scaled functionals
\begin{equation}
E_\eps(u,\Omega)=\int_\Omega \bigg(\frac{\eps}{2}|\nabla u|^2+\frac{W(u)}{\eps}\bigg) dx
\end{equation}
as $\eps\to 0$. For instance, Modica proved that, if $\eps_k$ is a sequence of positive numbers tending to $0$ and $u_{\eps_k}$ is a sequence of minimisers of $E_{\eps_k}(\cdotp,\Omega)$ under the constraint (\ref{mass_constraint}) such that $u_{\eps_k}\to u_0$ in $L^1(\Omega)$, then $u_0(x)\in\{\pm 1\}$ for almost every $x\in\Omega$, and the boundary in $\Omega$ of the set $E:=\{x\in\Omega:\, u_0(x)=1\}$ has minimal perimeter among all subsets $F\subset \Omega$ such that $|F|=|E|$, where $|\cdotp|$ denotes the volume (see \cite{M}, Theorem $1$). Further $\Gamma$-convergence results relating $E_\eps(\cdotp,\Omega)$ to the perimeter can be found in \cite{MM}. Therefore, given a family $\{u_\eps\}_{\eps\in(0,\eps_0)}$ of minimisers under the constraint (\ref{mass_constraint}), their nodal set is expected to be close to a compact Alexandrov-embedded constant mean curvature surface, at least for $\eps$ small. Corollary \ref{cor_level_set}, together with a scaling argument, shows that, for $\eps$ small enough, the nodal set of \textit{any} entire solution to
\begin{equation}
-\eps\Delta u=\eps^{-1}(u-u^3)-\ell,\qquad\ell>0,\label{cahn-hilliard-scaled}
\end{equation}
in $\R^N$ such that $u>z_2(\eps \ell)$ outside a ball is actually a sphere, which is known to be the unique compact Alexandrov-embedded constant mean surface in $\R^N$ (see \cite{A}).\\

After that, we set
$$C_R:=\{(x',x_N)\in\R^N:\, |x'|<R\}$$
and we consider solutions satisfying
\begin{equation}
u_\delta>z_2(\delta) \qquad \text{outside a cylinder $C_R\subset\R^N$.}\label{u_pos_cyl}
\end{equation}
The aim is to study their symmetry properties and their asymptotic behaviour as $\delta\to 0$, with particular interest in solutions which have one periodicity direction.
\begin{theorem}
Let $\{u_\delta\}_{\delta\in(0,\frac{2}{3\sqrt{3}})}$ be a family of non constant solutions to (\ref{cahn-hilliard}) in $\R^N$, with $N\ge 2$. Assume furthermore that $u_\delta$ is periodic in $x_N$ and, for any $\delta\in(0,\frac{2}{3\sqrt{3}})$, there exists $R(\delta)>0$ such that (\ref{u_pos_cyl}) is true. Then
\begin{enumerate}
\item $z_1(\delta)<u_\delta(x)<z_3(\delta)$, for any $x\in\R^N$.\label{u_bounded}
\item $u_\delta$ is radially symmetric in $x'$.\label{u_symm}
\item $u_\delta\to -1$ as $\delta\to 0$ uniformly on compact subsets of $\R^N$.\label{u_to-1}
\end{enumerate}
\label{main_th}
\end{theorem}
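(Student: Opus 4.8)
The plan is to prove the three assertions in order, (1) supplying the a priori bounds that make the arguments for (2) and (3) run. \textbf{Step 1 (the bounds).} Since $-f_\delta(t)=t^3-t+\delta$ grows like $t^3$ at $+\infty$, it admits a continuous nondecreasing minorant $g$ with $\int^\infty(\int^s g)^{-1/2}\,ds<\infty$, and $-\Delta u_\delta=f_\delta(u_\delta)$ gives $\Delta u_\delta\ge g(u_\delta)$ on $\R^N$; the Keller--Osserman interior estimate on unit balls then bounds $u_\delta$ from above, and the same argument applied to $-u_\delta$ bounds it from below, so $u_\delta$ is globally bounded. To get $u_\delta<z_3(\delta)$: if $M:=\sup u_\delta$ is attained, an interior maximum forces $f_\delta(M)\ge0$, hence $M\le z_3(\delta)$; if not, pick $x_k$ with $u_\delta(x_k)\to M$, use periodicity to keep the $x_N$-coordinate bounded, translate in $x'$ and pass (elliptic estimates) to a limit solution attaining its maximum $M$, so again $M\le z_3(\delta)$. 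If $M=z_3(\delta)$ is attained by $u_\delta$ itself then the strong maximum principle applied to $z_3(\delta)-u_\delta\ge0$ (using $f_\delta'(z_3(\delta))<0$, as $z_3(\delta)>1/\sqrt3$) forces $u_\delta\equiv z_3(\delta)$, contradicting nonconstancy; otherwise $u_\delta<z_3(\delta)$. The lower bound $u_\delta>z_1(\delta)$ is symmetric, using $f_\delta>0$ on $(-\infty,z_1(\delta))$, $f_\delta'(z_1(\delta))<0$, and the hypothesis $u_\delta>z_2(\delta)>z_1(\delta)$ outside $C_R$ to rule out translated limits $\equiv z_1(\delta)$.

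\textbf{Step 2 (asymptotics and radial symmetry).} First I would show $u_\delta(x',x_N)\to z_3(\delta)$ as $|x'|\to\infty$, uniformly in $x_N$: with $\ell:=\liminf_{|x'|\to\infty}u_\delta\in[z_2(\delta),z_3(\delta)]$, a translation along $|x_k'|\to\infty$ yields a solution $\bar u$ on $\R^N$ with $z_2(\delta)\le\bar u\le z_3(\delta)$ and global minimum $\ell$, hence $f_\delta(\ell)\le0$, so $\ell\in\{z_2(\delta),z_3(\delta)\}$ because $f_\delta>0$ on $(z_2(\delta),z_3(\delta))$; and $\ell=z_2(\delta)$ is impossible, since the strong maximum principle would give $\bar u\equiv z_2(\delta)$, i.e. $u_\delta-z_2(\delta)\to0$ uniformly on balls $B_{\rho_k}(x_k)$ with $\rho_k\to\infty$, while there $-\Delta(u_\delta-z_2(\delta))\ge\tfrac12 f_\delta'(z_2(\delta))(u_\delta-z_2(\delta))$ with $f_\delta'(z_2(\delta))=1-3z_2(\delta)^2>0$, contradicting $\lambda_1(-\Delta;B_{\rho_k})\to0$. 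With this uniform convergence in hand I would run the moving plane method in every direction $e\in\R^{N-1}\times\{0\}$: for $\lambda$ large, $\{x\cdot e>\lambda\}\subset\{|x'|>\lambda\}$, so $u_\delta$ lies within $\eps(\lambda)\to0$ of $z_3(\delta)$ there, and for $w_\lambda(x):=u_\delta(x^\lambda)-u_\delta(x)$ ($x^\lambda$ the reflection across $\{x\cdot e=\lambda\}$) one has $-\Delta w_\lambda=c_\lambda w_\lambda$ with $c_\lambda$ bounded and $w_\lambda=0$ on $\{x\cdot e=\lambda\}$; a maximum-point and translation argument (if $\sup w_\lambda>0$, it would be attained, possibly in a limit, at a point where all relevant values of $u_\delta$ lie just below $z_3(\delta)$, forcing $c_\lambda\ge0$ there and contradicting $f_\delta'(z_3(\delta))<0$) gives $w_\lambda\le0$ for all large $\lambda$. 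Sliding $\lambda$ down to a critical value $\lambda_0(e)$ and invoking the usual alternative (strong maximum principle and Hopf lemma, then a compactness plus narrow-region maximum principle argument to push past $\lambda_0$) forces symmetry of $u_\delta$ about $\{x\cdot e=\lambda_0(e)\}$ with strict monotonicity beyond it; doing this for all $e$, the hyperplanes meet at a point $x_0'$ and $u_\delta$ is radial in $x'$ about $x_0'$, strictly increasing in $|x'-x_0'|$.

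\textbf{Step 3 (the limit $\delta\to0^+$).} Using Step 1, for $\delta_k\to0^+$ pass to a $C^2_{loc}$ limit $u_{\delta_k}\to u_0$, which solves the Allen--Cahn equation $-\Delta u_0=u_0-u_0^3$ with $-1\le u_0\le1$ and is radial, monotone non-decreasing in $x'$ (recall $z_i(\delta_k)\to\{-1,0,1\}$); it suffices to prove $u_0\equiv-1$, since then every subsequential limit is $-1$ and $u_\delta\to-1$ uniformly on compacta. If $u_0$ is constant it equals $-1,0$ or $1$, and $u_0\equiv0$ resp. $u_0\equiv1$ would force $u_{\delta_k}$ uniformly close to the nondegenerate constant $z_2(\delta_k)$ resp. $z_3(\delta_k)$, contradicting $u_{\delta_k}>z_2(\delta_k)$ outside a cylinder resp. nonconstancy. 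If $u_0$ is nonconstant, monotonicity gives a radial limit $u_0(\rho,x_N)\to L\in\{-1,0,1\}$ as $\rho\to\infty$, and on $\{u_0<0\}$ (all of $\R^N$ if $u_0<0$ everywhere, otherwise a cylinder $\{|x'-x_0'|<r_0\}$) $u_0$ is subharmonic and bounded above, so a Liouville/boundary-point argument, combined with the sign of $f_0$ and the fact that for $\delta>0$ the well at $z_1(\delta)$ is the deeper one, rules out every case except $u_0\equiv-1$.

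\textbf{Main obstacle.} The hard part is Step 2: carrying out the moving plane method on the unbounded cylindrical domain, where the reflected point may leave the region in which $u_\delta$ is close to $z_3(\delta)$ and where extrema need not be attained, so each sliding step has to be coupled with a translation/compactness argument and with the sign information $f_\delta'(z_3(\delta))<0$; establishing the uniform asymptotic $u_\delta\to z_3(\delta)$ is the key that makes all of this work. A secondary difficulty is the rigidity of the Allen--Cahn limit in Step 3, in particular excluding a nonconstant cylindrical limit.
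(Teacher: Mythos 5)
Your Steps 1 and 2 are essentially sound and follow the same route as the paper, with minor substitutions: the Keller--Osserman bound replaces the Brezis--Kato argument of Proposition \ref{prop_bounded} (acceptable, since only classical solutions are at stake), and in Step 2 you first prove $u_\delta\to z_3(\delta)$ as $|x'|\to\infty$ uniformly in $x_N$ (the paper obtains this from Lemma $2.3$ of \cite{F1}, see Lemma \ref{lemma_dec_x'}) and then re-derive by hand the moving-plane symmetry statement that the paper simply quotes as Theorem \ref{th_FMR} from \cite{FMR}; the sliding/narrow-region details you defer are precisely the content of that cited theorem, so nothing conceptually new is missing there.

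The genuine gap is Step 3, i.e.\ conclusion (\ref{u_to-1}). This point is not a soft consequence of compactness plus a classification of the Allen--Cahn limit: the whole difficulty is to exclude the limits $u_0\equiv 0$, $u_0\equiv 1$ and nonconstant limits with $\min u_0\in(-1,1)$, that is, to prove that the transition is complete, $\inf_{\R^N}u_\delta\to -1$. Your exclusions do not hold as stated. Locally uniform closeness of $u_{\delta_k}$ to $z_2(\delta_k)$ does not contradict $u_{\delta_k}>z_2(\delta_k)$ outside $C_{R(\delta_k)}$, because that inequality is one-sided and $R(\delta_k)$ may blow up (the paper in fact proves $R(\delta)\to\infty$, see (\ref{Rdelta_infty})). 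Likewise $u_0\equiv 1$ is perfectly compatible with nonconstancy of $u_{\delta_k}$, since the interesting behaviour can escape to infinity; ruling it out requires a rigidity statement such as Lemma \ref{lemma_const} (a solution $\ge 1/\sqrt3$ everywhere is $\equiv z_3(\delta)$), which you never invoke. Most importantly, the decisive ingredient you appeal to, ``the well at $z_1(\delta)$ is the deeper one'', is invisible in the limit equation, where the two wells are balanced; it must be exploited quantitatively at $\delta>0$ before passing to the limit. This is exactly what Proposition \ref{prop_inf-sup} does: assuming $\inf u_{\delta_k}\ge\nu>-1$, it produces large balls on which $u_{\delta_k}$ is close to $1$; it constructs (Lemma \ref{lemma_beta}) a barrier $\beta_{R,\delta}$ by minimising the functional with the modified potential $\tilde W_\delta$, showing by an energy comparison (using $W_\delta(z_3(\delta))<\alpha/2<\alpha$) that $\sup_{B_R}\beta_{R,\delta}>1/\sqrt3$; it then slides this barrier to force $u_{\delta_k}>1/\sqrt3$ everywhere, and concludes from Lemma \ref{lemma_const} that $u_{\delta_k}$ is constant, a contradiction. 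No mechanism equivalent to this barrier-plus-sliding argument appears in your proposal, so Step 3 remains unproved. (Your use of radial monotonicity and periodicity to locate the infimum at the origin is correct, and it is also how the paper converts $\inf u_\delta\to-1$ into the locally uniform convergence (\ref{uto-1_compacts}) in Proposition \ref{prop_u_to-1}.)
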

In view of the aforementioned $\Gamma$-convergence results, given a solution $u$ to (\ref{cahn-hilliard-scaled}) satisfying (\ref{u_pos_cyl}), with $\delta=\eps\ell$, we expect its nodal set to be close to an Alexandrov-embedded constant mean curvature surface which is contained in a cylinder. This kind of surfaces are fully classified, at least the ones which are embedded in $\R^3$, in fact it is known that the unique examples are the sphere and Delaunay \textit{unduloids}, that is a family of non compact revolution surfaces obtained by rotating a periodic curve around a fixed axis in $\R^3$, which can be taken to be the $x_3$-axis, parametrised by a real number $\tau\in(0,1)$. We will denote the period of $D_\tau$ by $T_\tau$. For a detailed introduction of Delaunay surfaces, we refer to \cite{JP,MP}. For any $\tau\in(0,1)$, Kowalczyk and Hernandez \cite{HK} constructed a family $\{u_{\tau,\eps}\}_{\eps\in(0,\eps_0)}$ of solutions to (\ref{cahn-hilliard-scaled}) in $\R^3$, with $\ell=\ell_\eps$ depending on $\eps$, such that 
\begin{enumerate}
\item $\ell_\eps$ is positive and bounded uniformly in $\eps$. \label{molt_lagr>0}
\item $u_{\tau,\eps}$ is radially symmetric in $x'$.\label{rad_x'}
\item $u_{\tau,\eps}(x)\to \pm 1$ as $\eps\to 0$, uniformly on compact subsets of $\Omega_\tau^\pm$, where $\Omega_\tau^\pm$ denote the exterior and the interior of the Delaunay surface $D_\tau$, respectively.\label{lim_eps}
\item $u_{\tau,\eps}(x',x_3)\to z_3(\eps\ell_\eps)$ as $|x'|\to\infty$, uniformly in $x_3$.\label{lim_x'}
\item $u_{\tau,\eps}$ is periodic in $x_3$ of period $T_\tau$. \label{u_per_xN}
\end{enumerate}
We observe that the solutions $u_{\eps,\tau}$ constructed in \cite{HK} are actually negative outside a cylinder, however, in order to obtain the aforementioned family, thanks to the oddness of $f$, it is enough to replace them with $-u_{\eps,\tau}$. An interesting question is uniqueness. 
In other words, we are interested in the following question.\\

\begin{question}[Uniqueness]
Let $\eps_0>0$, $\tau\in(0,1)$ and let $v$ be a non constant solution to (\ref{cahn-hilliard-scaled}) in $\R^3$ with $\ell=\ell_\eps$, for $\eps\in(0,\eps_0)$. Assume in addition that
\begin{itemize}
\item $\ell_\eps$ is bounded uniformly in $\eps$.
\item $v$ is periodic in $x_3$, with period $T_\tau$.
\item $v>z_2(\eps\ell_\eps)$ outside a ball $B_R$.
\end{itemize}
Is it true that $v=u_{\eps,\tau}$, at least if $\eps_0$ is small enough?
\end{question}

This would be the counterpart of Corollary \ref{cor_level_set} for periodic solutions. For now we are not able to give a full answer to this question. However Theorem \ref{main_th} is a first step in this direction, since it proves that any family $\{v_\eps\}_{\eps\in(0,\eps_0)}$ of such solutions has to share many properties with the family $\{u_{\tau,\eps}\}_{\eps\in(0,\eps_0)}$ constructed by Hernandez and Kowalczyk. For instance, for $\eps$ small, $v_\eps$ has to satisfy (\ref{molt_lagr>0}), (\ref{rad_x'}), (\ref{lim_x'}) and the scaled functions $v_\eps(\eps x)$ tend to $-1$ uniformly on compact subsets of $\R^N$ as $\eps\to 0$.\\

The plan of the paper is the following. In Section \ref{section_results} we will state some quite general results, of which the Theorems stated in the introduction are consequences. Section \ref{section_proofs} is devoted to the proofs. It is divided into three subsections, dedicated to prove global boundedness, radial symmetry and the asymptotic behaviour for $\delta$ small respectively. 

\section{Some relevant results}\label{section_results}

In this section we state some results that are proved in section \ref{section_proofs}. First we prove boundedness of solutions, which holds irrespectively of the sign of $\delta$.
\begin{proposition}
Let $\delta\in(-\frac{2}{3\sqrt{3}},\frac{2}{3\sqrt{3}})$ and let $u_\delta\in L^3_{loc}(\R^N)$ be a distributional solution to the Cahn-Hilliard equation (\ref{cahn-hilliard}). Then 
$$z_1(\delta)\le u_\delta(x)\le z_3(\delta)$$
a. e. in $\R^N$.
\label{prop_bounded}
\end{proposition}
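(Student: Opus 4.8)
The plan is to show that the two nonnegative ``excess'' functions
$w_+:=(u_\delta-z_3(\delta))^+$ and $w_-:=(z_1(\delta)-u_\delta)^+$ vanish identically, by first proving that each is an entire distributional subsolution of $\Delta w\ge w^3$ and then ruling out nontrivial such subsolutions by a Keller--Osserman/Liouville argument. The starting algebraic remark is that, since $t-t^3-\delta$ has leading term $-t^3$, one has $f_\delta(t)=-(t-z_1)(t-z_2)(t-z_3)$ with $z_1<z_2<z_3$ as in the statement; in particular $\Delta u_\delta=-f_\delta(u_\delta)\in L^1_{loc}(\R^N)$ (here is where the hypothesis $u_\delta\in L^3_{loc}$ is used), and $-f_\delta(t)=(t-z_1)(t-z_2)(t-z_3)$ is $>0$ for $t>z_3$ and $<0$ for $t<z_1$.

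Applying Kato's inequality (legitimate because $u_\delta,\Delta u_\delta\in L^1_{loc}$) to the convex Lipschitz functions $t\mapsto(t-z_3)^+$ and $t\mapsto(z_1-t)^+$ gives, in $\mathcal D'(\R^N)$,
$$\Delta w_+\ \ge\ \chi_{\{u_\delta>z_3\}}\,(u_\delta-z_1)(u_\delta-z_2)(u_\delta-z_3),\qquad \Delta w_-\ \ge\ \chi_{\{u_\delta<z_1\}}\big[-(u_\delta-z_1)(u_\delta-z_2)(u_\delta-z_3)\big].$$
On $\{u_\delta>z_3\}$ write $u_\delta-z_3=w_+>0$ and use $z_3-z_1,z_3-z_2>0$ to get $(u_\delta-z_1)(u_\delta-z_2)(u_\delta-z_3)=(w_++z_3-z_1)(w_++z_3-z_2)w_+\ge w_+^3$; off that set the right-hand side and $w_+^3$ both vanish. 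Hence $\Delta w_+\ge w_+^3$ on all of $\R^N$, and symmetrically $\Delta w_-\ge w_-^3$. In particular $w_\pm\ge0$ are subharmonic, hence locally bounded; since $|u_\delta|\le\max(|z_1|,|z_3|)+w_++w_-$ this already gives $u_\delta\in L^\infty_{loc}$, and then a routine bootstrap ($f_\delta(u_\delta)\in L^\infty_{loc}\Rightarrow u_\delta\in W^{2,p}_{loc}$ for all $p\Rightarrow u_\delta\in C^{1,\alpha}_{loc}\Rightarrow$ Schauder) shows $u_\delta$ coincides a.e.\ with a smooth classical solution, so $w_\pm$ are continuous and $\Delta w_\pm\ge w_\pm^3$ holds weakly with continuous data.

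For the Keller--Osserman step, fix $x_0\in\R^N$ and, for $R>0$, set on $B_R(x_0)$ the explicit barrier $\Phi_R(x):=C_N R\,(R^2-|x-x_0|^2)^{-1}$ with $C_N:=\sqrt{\max(8,2N)}$. A direct computation gives $\Delta\Phi_R\le\Phi_R^3$ in $B_R(x_0)$, while $\Phi_R\to+\infty$ on $\partial B_R(x_0)$. Comparing $w_+$ (bounded on $\overline{B_R(x_0)}$) with $\Phi_R$: if $m:=\sup_{B_R(x_0)}(w_+-\Phi_R)>0$, the supremum is attained at an interior point $\bar x$, where $w_+(\bar x)>\Phi_R(\bar x)\ge0$ forces $w_+(\bar x)^3>\Phi_R(\bar x)^3$, so $\Delta(w_+-\Phi_R)\ge w_+^3-\Phi_R^3$ is $>0$ near $\bar x$, which is incompatible with $\bar x$ being an interior maximum (weak maximum principle for continuous subsolutions). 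Hence $w_+\le\Phi_R$ on $B_R(x_0)$, and letting $R\to\infty$ yields $w_+(x_0)\le\lim_{R\to\infty}C_N/R=0$. Since $x_0$ is arbitrary, $u_\delta\le z_3(\delta)$ a.e.; the identical argument with $w_-$ gives $u_\delta\ge z_1(\delta)$ a.e.

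The main obstacle, or rather the points needing care, are: (i) justifying the distributional Kato inequality and the ensuing local boundedness/smoothness for a solution only assumed in $L^3_{loc}$ — one must notice that $L^3_{loc}$ is exactly the integrability making $\Delta u_\delta=-f_\delta(u_\delta)$ an $L^1_{loc}$ function, so that Kato applies and $w_\pm$ are genuinely subharmonic (not merely satisfying the pointwise inequality off a small set); and (ii) the comparison with the barrier, which relies on the global nature of the problem (the absence of any condition at infinity is precisely what lets us send $R\to\infty$ and conclude). Neither step is deep, but each rests on invoking the correct classical tool (Kato's inequality for distributional Laplacians; the weak maximum principle) rather than on explicit computation.
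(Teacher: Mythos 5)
Your proof is correct, and it follows the same skeleton as the paper up to a point: the paper also truncates at the outer roots (it writes $w=u_\delta-z_2(\delta)$ and works with $(w-\beta)^+$ and $(\alpha-w)^+$, which are exactly your $w_+$ and $w_-$) and also uses the Kato-type inequality to obtain $\Delta w_\pm\ge w_\pm^3$ in $\mathcal D'(\R^N)$. Where you diverge is in how the resulting Liouville property is established. The paper simply invokes Brezis's theorem from \cite{B} (stated as Lemma \ref{lemma_BK}): any $v\in L^p_{loc}$ with $-\Delta v+|v|^pv\le 0$ in $\mathcal D'$ is nonpositive, with no condition at infinity and no regularity needed, so the proof ends in one line. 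You instead prove the special case you need by hand: you first upgrade regularity (nonnegative subharmonic $\Rightarrow$ locally bounded $\Rightarrow$ bootstrap to a smooth representative), and then run a Keller--Osserman comparison with the explicit barrier $\Phi_R(x)=C_NR(R^2-|x-x_0|^2)^{-1}$, whose constant $C_N=\sqrt{\max(8,2N)}$ does make $\Delta\Phi_R\le\Phi_R^3$ (the inequality reduces to $2[N(R^2-r^2)+4r^2]\le C_N^2R^2$), and let $R\to\infty$. What your route buys is self-containedness — no appeal to \cite{B} — and it incidentally yields the local boundedness and smoothness that the paper only records afterwards in Remark \ref{rem_regularity}; what it costs is the regularity detour, which the paper avoids entirely because Brezis's lemma applies directly to $L^p_{loc}$ distributional subsolutions. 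Both arguments crucially exploit that the cubic growth permits a conclusion without any hypothesis at infinity, and your handling of the two delicate points (Kato's inequality needing $\Delta u_\delta\in L^1_{loc}$, which is exactly what $u_\delta\in L^3_{loc}$ provides, and the strict inequality at an interior maximum in the comparison step) is sound.
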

\begin{remark}
\begin{itemize}
\item Using Proposition \ref{prop_bounded}, standard elliptic estimates (see \cite{GT}, Theorem $8.8$ and Corollary $6.3$) and a bootstrap argument, it is possible to show that any distributional solution $u\in L^3_{loc}(\R^3)$ is actually in $C^\infty(\R^N)$. This parallels the regularity result proved in \cite{F1} for the Allen-Cahn equation.
\item It follows from the strong maximum principle that either $u_\delta$ is constant, and in this case it has to be either $z_1(\delta)$, or $z_2(\delta)$ or $z_3(\delta)$, or it satisfies $z_1(\delta)<u_\delta<z_3(\delta)$ in $\R^N$.
\end{itemize}
\label{rem_regularity}
\end{remark}
We observe that Proposition \ref{prop_bounded} and Remark \ref{rem_regularity} prove point (\ref{u_bounded}) of Theorem \ref{main_th}, which is actually true for any non constant entire solution. After that, we rule out the case $\delta\le 0$, in which only constant solutions are allowed. 
\begin{proposition}
Let $u_\delta$ be a solution to (\ref{cahn-hilliard}) in $\R^N$, with $-\frac{2}{3\sqrt{3}}<\delta\le 0$ such that $u_\delta>z_2(\delta)$ outside a stripe $\{x\in\R^N:\,|x_N|<L\}$. Then $u_\delta\equiv z_3(\delta)$.
\label{prop_delta>0}
\end{proposition}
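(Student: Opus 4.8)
The plan is to use a sliding method in the $x_N$ direction, exploiting that $\delta\le 0$ forces $z_2(\delta)\le 0$ and, more importantly, that $z_3(\delta)$ is the largest root of $f_\delta$ and $f_\delta$ is decreasing on $[1/\sqrt3,\infty)$, so $z_3(\delta)$ is a stable zero. First I would invoke Proposition \ref{prop_bounded} and Remark \ref{rem_regularity}: either $u_\delta$ is one of the three constants $z_1(\delta),z_2(\delta),z_3(\delta)$, or $z_1(\delta)<u_\delta<z_3(\delta)$ everywhere; in the constant case the hypothesis $u_\delta>z_2(\delta)$ outside the stripe immediately gives $u_\delta\equiv z_3(\delta)$. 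So it remains to rule out the strict case.

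The key step is to show that $u_\delta\to z_3(\delta)$ as $|x_N|\to\infty$, uniformly in $x'$, and then to upgrade this to $u_\delta\equiv z_3(\delta)$ by a maximum-principle/energy argument. For the limit: since $u_\delta>z_2(\delta)$ outside the stripe, consider translates $u_\delta(\cdot\,,\cdot+t)$ for $|t|$ large. I would set $v:=z_3(\delta)-u_\delta\ge 0$ in the region $\{|x_N|>L\}$ and observe that $v$ satisfies a linear equation $-\Delta v = c(x)v$ where $c(x)=\frac{f_\delta(z_3)-f_\delta(u_\delta)}{z_3-u_\delta}<0$ on the set where $u_\delta\ge 1/\sqrt3$ (because $f_\delta$ is strictly decreasing there). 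The negativity of $c$ near $z_3$ makes the half-space $\{x_N>L\}$ a domain on which one can slide the constant supersolution $z_3(\delta)$ down; a classical sweeping argument (compare with a one-dimensional heteroclinic-type barrier, or directly use that any bounded solution on a half-space that stays above a level in the basin of the stable zero $z_3$ must converge to $z_3$) gives $\lim_{x_N\to+\infty}u_\delta(x',x_N)=z_3(\delta)$ uniformly in $x'$, and symmetrically as $x_N\to-\infty$. Actually the cleanest route is an energy argument à la Modica: the function $P(u):=\frac12|\nabla u|^2 - \big(W(u)-W(z_3(\delta))+\delta(u-z_3(\delta))\big)$ is a subharmonic-type quantity (Modica's estimate for \eqref{cahn-hilliard} with the potential $\widetilde W(t):=W(t)+\delta t$ whose global minimum over $[z_1,z_3]$ is attained at $z_3$ precisely when $\delta\le 0$), forcing $|\nabla u_\delta|^2\le 2(\widetilde W(u_\delta)-\widetilde W(z_3))$, and the right-hand side tends to $0$ away from the stripe.

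Once $u_\delta\to z_3(\delta)$ uniformly at infinity in $x_N$, I would finish by the touching-from-above principle: $z_3(\delta)$ is a supersolution (indeed a solution), $u_\delta\le z_3(\delta)$ by Proposition \ref{prop_bounded}, and $u_\delta$ approaches $z_3(\delta)$ at infinity in the $x_N$-directions. A sliding argument in $x_N$ — translating the subsolution-free comparison, or simply noting that $\sup_{\R^N}(z_3(\delta)-u_\delta)$ is either $0$ or attained at an interior point where the strong maximum principle for the equation $-\Delta v=c(x)v$, $c\le 0$ near the maximum, forces $v\equiv 0$ — yields $u_\delta\equiv z_3(\delta)$. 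Alternatively, once the Modica inequality is an equality in the limit one concludes $u_\delta$ is one-dimensional and then a phase-plane analysis of $-u''=f_\delta(u)$ with $u>z_2(\delta)$ outside a bounded interval shows the only bounded orbit is the constant $z_3(\delta)$.

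The main obstacle I anticipate is making the convergence $u_\delta\to z_3(\delta)$ at infinity genuinely uniform in $x'$ without any a priori decay or periodicity: one must combine the Modica-type gradient bound with an interior-elliptic-estimate compactness argument on translates $u_\delta(x',x_N+t_k)$, extract a limit solving \eqref{cahn-hilliard} on all of $\R^N$ with $u\ge z_2(\delta)$ everywhere, and then rule out any limit other than $z_3(\delta)$ — which is exactly where $\delta\le 0$ is used decisively, since for $\delta\le 0$ the only entire solution bounded below by $z_2(\delta)$ is the constant $z_3(\delta)$ (no bubbles exist, in contrast to $\delta>0$). Handling this limit-equation step rigorously, i.e. establishing the Liouville-type statement "$\delta\le 0$, $u\ge z_2(\delta)$ on $\R^N$ $\Rightarrow$ $u\equiv z_3(\delta)$", is the real content; the sliding/maximum-principle wrap-up is then routine.
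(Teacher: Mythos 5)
Your skeleton matches the paper's at the top level: boundedness via Proposition \ref{prop_bounded}, then uniform convergence $u_\delta\to z_3(\delta)$ as $|x_N|\to\infty$ (the paper gets this from Lemma \ref{lemma_dec_x'}, i.e. Lemma 2.3 of \cite{F1}, applied in the $x_N$-direction to the half-spaces $\{\pm x_N>L\}$), and then a rigidity step. The genuine gap is the rigidity step, which you yourself flag as ``the real content'' and never establish. Neither of the two wrap-ups you propose works. First, $\sup_{\R^N}\big(z_3(\delta)-u_\delta\big)$ need not be attained: the convergence to $z_3(\delta)$ is only in the $x_N$-direction, the domain is unbounded in $x'$, and the supremum may run off to infinity in $x'$. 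Even if it were attained at an interior point, at that point $u_\delta$ is at its \emph{minimum}, which may perfectly well lie in $(z_1(\delta),z_2(\delta))$ inside the stripe, where $c(x)=-f_\delta(u_\delta)/(z_3(\delta)-u_\delta)$ is \emph{positive} (since $f_\delta<0$ on $(z_1(\delta),z_2(\delta))$); your assertion ``$c\le 0$ near the maximum'' is only valid where $u_\delta\ge 1/\sqrt{3}$, which is exactly what is unknown at a minimum point. Second, the alternative ``once the Modica inequality is an equality in the limit one concludes $u_\delta$ is one-dimensional'' is unsubstantiated: the equality case of the Modica estimate gives one-dimensionality only if equality holds at some point, and you give no argument that it does; the subsequent phase-plane analysis therefore has nothing to act on. Likewise, extracting limits of translates $u_\delta(x',x_N+t_k)$ only reproduces information at infinity in $x_N$; it does not by itself yield the Liouville statement you correctly identify as decisive.

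The paper closes exactly this gap by invoking Theorem 4.2 of \cite{F1} (non-existence of nontrivial ground-state-type solutions): having $|u_\delta|\le z_3(\delta)$ (here $\delta\le 0$ is already used, via $|z_1(\delta)|\le z_3(\delta)$) and the uniform convergence to $z_3(\delta)$ in one direction, one checks the structural hypotheses of that theorem, which hold precisely because $\delta\le 0$: $f_\delta\ge 0$ on $(0,z_3(\delta))$, $f_\delta(t)+f_{-\delta}(t)=-2\delta\ge 0$, and $f_\delta$ non-increasing in a left neighbourhood of $z_3(\delta)$. So your proposal is salvageable only by citing (or reproving) such a rigidity theorem; as written, the final step of your argument fails.
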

We stress that the latter result proves point (\ref{const_case}) of Theorem \ref{th_global_radial} and agrees with the sign of $\delta$ obtained by Hernández and Kowalczyk in \cite{HK}. Using boundedness and the famous result by Gidas, Ni and Nirenberg \cite{GNN}, or Theorem $2$ of \cite{FMR}, which relies on the moving planes method, we can prove this symmetry result. 
\begin{proposition}
Let $\delta\in(0,2/3\sqrt{3})$ and let $u_\delta$ be a non constant solution to (\ref{cahn-hilliard}) such that $u_\delta>z_2(\delta)$ outside a ball $B_R$, for some $R>0$. Then\\
\begin{itemize}
\item $u_\delta$ is radially symmetric, that is, up to a translation, $u_\delta(x)=w_\delta(|x|)$.
\item $u_\delta$ is radially increasing, in the sense that $(\nabla u_\delta(x),x)> 0$, for any $x\in\R^N\backslash\{0\}$.
\end{itemize}
\label{prop_rad_global}
\end{proposition}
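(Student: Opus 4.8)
The plan is to deduce Proposition \ref{prop_rad_global} from the moving-planes machinery, once the hypotheses of the Gidas--Ni--Nirenberg type theorem (in the form of \cite{GNN} or Theorem 2 of \cite{FMR}) have been verified. The two facts we need as input are: (i) $u_\delta$ is bounded, which is Proposition \ref{prop_bounded} together with the regularity statement in Remark \ref{rem_regularity}, so that $u_\delta\in C^\infty(\R^N)$ and $z_1(\delta)<u_\delta<z_3(\delta)$ since $u_\delta$ is non constant; and (ii) $u_\delta$ has a suitable limit at infinity. For the latter, the key observation is that the condition $u_\delta>z_2(\delta)$ outside $B_R$ forces $u_\delta\to z_3(\delta)$ as $|x|\to\infty$. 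Indeed, in the region $\{u_\delta>z_2(\delta)\}$ the nonlinearity $f_\delta(t)=t-t^3-\delta$ vanishes only at $z_3(\delta)$, and $z_3(\delta)$ is a stable zero (i.e. $f_\delta'(z_3(\delta))<0$); combining this with the a priori bound $u_\delta\le z_3(\delta)$ and interior elliptic estimates, a standard sliding/comparison argument with the one-dimensional heteroclinic-type barriers (or simply invoking that any bounded entire solution of $-\Delta v=f_\delta(v)$ with $v\in(z_2,z_3]$ which does not touch $z_2$ must converge to $z_3$ at infinity, by an energy or a comparison argument) yields $u_\delta(x)\to z_3(\delta)$ uniformly as $|x|\to\infty$.

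With this asymptotic behaviour in hand, I would set $v:=z_3(\delta)-u_\delta>0$, which solves $-\Delta v=g(v)$ with $g(s):=-f_\delta(z_3(\delta)-s)=f_\delta(z_3(\delta))-f_\delta(z_3(\delta)-s)$, so $g(0)=0$, $g$ is $C^1$, and $v\to 0$ at infinity with $v>0$ on $\R^N$. Moreover $g'(0)=-f_\delta'(z_3(\delta))>0$... more precisely $g'(0)=f_\delta'(z_3(\delta))<0$, so near $0$ the nonlinearity is of the decaying/cooperative type required by the symmetry results for positive solutions vanishing at infinity. Then I would apply the moving planes method exactly as in Gidas--Ni--Nirenberg for entire domains (or cite Theorem 2 of \cite{FMR}, which is tailored to this kind of hypothesis): the positivity of $v$, its decay at infinity, and the one-sided Lipschitz control on $g$ near $0$ allow one to start the planes from infinity in every direction, and the usual maximum-principle-in-narrow-domains plus the strong maximum principle force every hyperplane to be a symmetry hyperplane, giving radial symmetry about some point (the translation in the statement) and strict monotonicity $\partial_r v<0$, i.e. $(\nabla u_\delta(x),x)>0$ for $x\ne 0$, after recentering.

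The main obstacle I expect is precisely step (ii): establishing that $u_\delta\to z_3(\delta)$ at infinity, rather than merely staying above $z_2(\delta)$. The hypothesis only gives a one-sided bound outside a ball, not a limit, and the region where $u_\delta$ could oscillate between $z_2(\delta)$ and $z_3(\delta)$ is unbounded, so one cannot argue by compactness alone. The natural route is a blow-down / translation argument: take $|x_k|\to\infty$, translate so that $x_k\mapsto 0$, extract a locally uniform limit $u_\infty$ (using the uniform $C^{2,\alpha}$ bounds from Proposition \ref{prop_bounded} and Schauder theory), note $u_\infty$ solves the same equation with $u_\infty\ge z_2(\delta)$ everywhere; then one must show such a $u_\infty$ is identically $z_3(\delta)$. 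Here one can use a stability/energy argument — $z_3(\delta)$ is the only zero of $f_\delta$ in $[z_2(\delta),z_3(\delta)]$ and it is linearly stable — together with a Liouville-type theorem (or a sliding argument with constant subsolutions $z_2(\delta)+\eta$ pushed up to $z_3(\delta)$, which works because $f_\delta>0$ on $(z_2(\delta),z_3(\delta))$). Once the limit is identified independently of the sequence, uniform convergence follows, and the moving planes argument can be invoked verbatim from \cite{GNN} or \cite{FMR}; the remaining verifications (regularity, the change of variables $v=z_3(\delta)-u_\delta$, the sign of $g'$ near $0$) are routine.
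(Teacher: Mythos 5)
Your overall architecture is exactly the paper's: boundedness and smoothness from Proposition \ref{prop_bounded} and Remark \ref{rem_regularity}, then convergence $u_\delta\to z_3(\delta)$ at infinity, then the symmetry theorem of \cite{GNN} or Theorem 2 of \cite{FMR} applied to $v=z_3(\delta)-u_\delta$ (your sign for $g'(0)=f_\delta'(z_3(\delta))<0$ is, after the self-correction, the right one). The only place you diverge is the step you yourself flag as the main obstacle, namely proving $u_\delta\to z_3(\delta)$ as $|x|\to\infty$, and there your sketch has a genuine hole. The paper does not prove this from scratch: it is Lemma \ref{lemma_dec_x'}, obtained by applying Lemma 2.3 of \cite{F1} to $w=u_\delta-z_2(\delta)$ in half-spaces avoiding $B_R$, using that $g(t)=-t(t-\alpha)(t-\beta)$ is positive on $(0,\beta)$ with $g'(0)>0$, i.e.\ that $z_2(\delta)$ is an \emph{unstable} zero of $f_\delta$.

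Your proposed substitute (blow-down along $|x_k|\to\infty$ plus a Liouville-type classification of entire solutions trapped in $[z_2(\delta),z_3(\delta)]$) is a reasonable plan, but the tool you name — ``sliding constant subsolutions $z_2(\delta)+\eta$ up to $z_3(\delta)$'' — cannot carry it out: a constant subsolution cannot be slid, and there is no initial ordering or point of contact from which a comparison with $u_\infty$ (or with $u_\delta$ far out) could start, so it neither excludes the possibility that the translated limit is the unstable constant $z_2(\delta)$ itself, nor that it is a nonconstant solution with $\inf u_\infty=z_2(\delta)$. What is actually needed is to exploit $f_\delta'(z_2(\delta))>0$ through \emph{compactly supported} subsolutions: for $\rho$ so large that the principal Dirichlet eigenvalue $\lambda_1(B_\rho)$ is below $f_\delta'(z_2(\delta))$, the functions $z_2(\delta)+\eps\varphi_\rho$ ($\varphi_\rho$ the principal eigenfunction, $\eps$ small) are subsolutions vanishing on $\partial B_\rho$, and sweeping/sliding these balls inside the region $\{u_\delta>z_2(\delta)\}$ gives a uniform lower bound $u_\delta\ge z_2(\delta)+c_0$ far from $B_R$, after which the identification of the limit with $z_3(\delta)$ (the only zero of $f_\delta$ in $(z_2(\delta),z_3(\delta)]$) and the uniformity follow. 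This eigenfunction-barrier argument is precisely the content of Farina's Lemma 2.3 that the paper invokes; without it (or an equivalent), your step (ii) is not justified, while everything before and after it coincides with the paper's proof.
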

Proposition \ref{prop_rad_global} proves point (\ref{rad_case}) of Theorem \ref{th_global_radial}. More precisely, it is known that, for $\delta\in(0,\frac{2}{3\sqrt{3}})$, the problem
\begin{equation}
\begin{cases}
-\Delta v_\delta=v_\delta-v_\delta^3-\delta &\text{in $\R^N$}\\
v_\delta(0)=\min_{\R^N} v_\delta, \, v_\delta<z_3(\delta), \, v_\delta(x)\to z_3(\delta) &\text{as $|x|\to\infty$}
\end{cases}
\label{prob_rad}
\end{equation}
admits a unique solution which is radially symmetric (see \cite{WW,D,PS}), that is $v_\delta(x)=w_\delta(|x|)$. In view of this fact, we can actually prove the following classification result.
\begin{proposition}
Let $\delta\in (0,\frac{2}{3\sqrt{3}})$ and let $u_\delta$ be a non constant solution to (\ref{cahn-hilliard}) such that $u_\delta>z_2(\delta)$ outside a ball $B_R$. Then, up to a translation, $u_\delta=v_\delta$.
\label{prop_uniqueness}
\end{proposition}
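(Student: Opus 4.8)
The plan is to show that $u_\delta$, after a suitable translation, solves the problem (\ref{prob_rad}), and then to invoke the known uniqueness result of \cite{WW,D,PS}. By Proposition \ref{prop_rad_global} we may assume, translating if necessary, that $u_\delta(x)=w_\delta(|x|)$ with $w_\delta'(r)>0$ for every $r>0$; in particular $u_\delta(0)=\min_{\R^N}u_\delta$, so the first condition in (\ref{prob_rad}) holds. By Proposition \ref{prop_bounded} and the strong maximum principle (Remark \ref{rem_regularity}), since $u_\delta$ is non constant we have $z_1(\delta)<u_\delta<z_3(\delta)$ in $\R^N$, which gives the constraint $u_\delta<z_3(\delta)$. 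Thus the whole matter reduces to proving the asymptotic condition $u_\delta(x)\to z_3(\delta)$ as $|x|\to\infty$.

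First I would observe that, since $w_\delta$ is increasing and bounded above by $z_3(\delta)$, the limit $L:=\lim_{r\to\infty}w_\delta(r)$ exists and satisfies $L\le z_3(\delta)$; moreover $u_\delta>z_2(\delta)$ outside $B_R$ together with the monotonicity of $w_\delta$ yields $L>z_2(\delta)$. Since $z_3(\delta)$ is the only root of $f_\delta$ in the interval $(z_2(\delta),z_3(\delta)]$, it suffices to prove $f_\delta(L)=0$. The profile solves the ODE $w_\delta''+\frac{N-1}{r}w_\delta'+f_\delta(w_\delta)=0$ on $(0,\infty)$. Interior gradient estimates for (\ref{cahn-hilliard}), combined with the bound $z_1(\delta)\le u_\delta\le z_3(\delta)$, show that $\nabla u_\delta$ is bounded, hence $w_\delta'$ is bounded and, via the ODE, $w_\delta''$ is bounded on $[1,\infty)$, so $w_\delta'$ is Lipschitz there. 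On the other hand $w_\delta$ is monotone and bounded, so $w_\delta'\in L^1(1,\infty)$; a nonnegative Lipschitz function on a half-line that is integrable must tend to $0$, hence $w_\delta'(r)\to 0$. Letting $r\to\infty$ in the ODE then gives $w_\delta''(r)\to -f_\delta(L)$, and if $f_\delta(L)\ne 0$ this would force $w_\delta'$ to be unbounded, a contradiction. Therefore $f_\delta(L)=0$ and $L=z_3(\delta)$.

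Once this is established, $u_\delta$ is a smooth solution of (\ref{prob_rad}) centered at the origin, and the uniqueness result of \cite{WW,D,PS} gives $u_\delta=v_\delta$ up to the initial translation, which is the claim.

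I do not expect a genuine obstacle here: granting Proposition \ref{prop_rad_global}, the only delicate point is the behaviour of $u_\delta$ at infinity, i.e. excluding that the monotone radial profile settles at some value strictly between $z_2(\delta)$ and $z_3(\delta)$; this is dispatched by the elementary ODE argument above. Alternatively, one could note that for $|x_n|\to\infty$ the translates $u_\delta(\cdot+x_n)$ converge in $C^2_{loc}(\R^N)$ to the constant $L$, which is then necessarily a zero of $f_\delta$.
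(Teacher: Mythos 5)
Your proof is correct, and its overall skeleton is the same as the paper's: translate so that the minimum is at the origin (Proposition \ref{prop_rad_global}), check that $u_\delta$ solves (\ref{prob_rad}), and invoke the uniqueness result of \cite{WW,D,PS}. The one place where you diverge is the asymptotic condition $u_\delta(x)\to z_3(\delta)$ as $|x|\to\infty$. The paper gets this in one line by citing Lemma \ref{lemma_dec_x'} (Farina's half-space result applied to $u_\delta-z_2(\delta)$), which is available for \emph{any} solution with $u_\delta>z_2(\delta)$ in a half-space, no symmetry needed --- indeed the same lemma was already used to prove Proposition \ref{prop_rad_global} itself, so the limit at infinity is essentially already in hand at this point. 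You instead re-derive the limit from scratch through the radial ODE: monotonicity and boundedness give a limit $L\in(z_2(\delta),z_3(\delta)]$, the uniform gradient bound plus $w'_\delta\in L^1$ and the Lipschitz bound on $w'_\delta$ force $w'_\delta(r)\to 0$, and then the equation forces $f_\delta(L)=0$, hence $L=z_3(\delta)$; your alternative compactness argument (translates converging in $C^2_{loc}$ to the constant $L$, which must be a zero of $f_\delta$) works too. What each buys: the paper's route is shorter and does not require radial symmetry to obtain the decay, which is why the same lemma also serves in the cylindrical setting of Proposition \ref{th_rad}; your Barbalat-type ODE argument is elementary and self-contained once symmetry is known, at the cost of duplicating information the paper already extracted, and of course it only applies after Proposition \ref{prop_rad_global} has been established.
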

In the sequel, we will use the notation $W_\delta(t):=W(t)+\delta t$.
\begin{remark}
It is possible to see that, for any $\delta\in(0,\frac{2}{3\sqrt{3}})$, there exists $R(\delta)>0$ such that $w_\delta(R(\delta))=0$. In fact, the energy 
$$E_\delta(r):=\frac{1}{2}(w'_\delta)^2-W_\delta(w_\delta)$$
is strictly decreasing, since
$$\frac{d}{dr}E(r)=w'_\delta(w''_\delta-W'_\delta(w_\delta))=-\frac{N-1}{r}(w'_\delta)^2<0, \qquad\forall \,r>0.$$
Thus, using that, by Proposition \ref{prop_rad_global}, $v_\delta$ is decreasing,
\begin{equation}
-W_\delta(w_\delta(0))=E_\delta(0)>0,\label{w_delta_0<0}
\end{equation}
which yields that $w_\delta(0)<0$.
\label{remark_w_delta_0<0}
\end{remark} 
In particular, in view of Remark \ref{remark_w_delta_0<0}, which yields that the nodal set of $v_\delta$ is neither empty nor a singleton, Corollary \ref{cor_level_set} is true.\\

Considering solutions that are approaching a positive limit just with respect to $N-1$ variables, we can prove the following.
\begin{proposition}
Let $\delta\in(0,2/3\sqrt{3})$ and let $u_\delta$ be a non constant solution to (\ref{cahn-hilliard}) such that $u_\delta>z_2(\delta)$ outside a cylinder $C_R$, for some $R>0$. If $u_\delta$ is periodic in $x_N$, then\\
\begin{itemize}
\item $u_\delta$ is radially symmetric in $x'$, that is, up to a translation, $u_\delta(x)=w_\delta(|x'|,x_N)$.
\item $u_\delta$ is radially increasing, in the sense that $(\nabla u_\delta(x), (x',0))> 0$, for any $x=(x',x_N)\in\R^N\backslash\{0\}$.
\end{itemize}
\label{th_rad}
\end{proposition}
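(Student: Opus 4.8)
The plan is to run the moving planes method in the directions orthogonal to the $x_N$-axis, exploiting the periodicity in $x_N$ to reduce the problem to a bounded-in-$x_N$ setting. First I would record the a priori bounds: by Proposition \ref{prop_bounded} and the strong maximum principle (Remark \ref{rem_regularity}), a non constant solution satisfies $z_1(\delta)<u_\delta<z_3(\delta)$ everywhere, and by standard elliptic estimates $u_\delta\in C^\infty$ with all derivatives bounded on $\R^N$ (using periodicity in $x_N$, boundedness on one period suffices). Next, fix a unit vector $e\in\R^{N-1}\times\{0\}$; I would show that $u_\delta$ is monotone in the $e$-direction up to the hyperplane through the "center" and symmetric about it. The key point that makes moving planes applicable here, despite the domain being all of $\R^N$, is hypothesis (\ref{u_pos_cyl}): outside the cylinder $C_R$ we have $u_\delta>z_2(\delta)$, and since $f_\delta'(z_3(\delta))<0$ (as $z_3(\delta)$ is the largest root and the polynomial is decreasing there), the solution is in the regime where the linearized operator has a good sign near $z_3(\delta)$; combined with periodicity this provides the compactness/decay substitute for the usual behaviour at infinity.

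Concretely, for $\lambda$ large I would compare $u_\delta$ with its reflection $u_\delta^\lambda(x):=u_\delta(2\lambda e - \langle x,e\rangle e + \dots)$ across the hyperplane $\{\langle x,e\rangle=\lambda\}$ on the half-space $\Sigma_\lambda:=\{\langle x,e\rangle>\lambda\}$. On $\Sigma_\lambda$ with $\lambda\ge R$, the whole half-space lies outside $C_R$ in the $e$-direction, so both $u_\delta$ and $u_\delta^\lambda$ exceed $z_2(\delta)$ there; the difference $w^\lambda:=u_\delta^\lambda-u_\delta$ solves a linear equation $-\Delta w^\lambda=c^\lambda(x)w^\lambda$ with $c^\lambda$ bounded, and one shows $w^\lambda\ge 0$ on $\Sigma_\lambda$ for $\lambda$ large — this is where I would invoke either the Gidas–Ni–Nirenberg-type argument or directly Theorem $2$ of \cite{FMR}, which is tailored to this periodic-cylinder situation and already cited for Proposition \ref{prop_rad_global}. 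Then I would define $\lambda_0:=\inf\{\lambda:\ w^\mu\ge 0 \text{ on }\Sigma_\mu \ \forall \mu\ge\lambda\}$ and, by the usual continuity-plus-strong-maximum-principle dichotomy (Hopf lemma on the moving hyperplane), show that at $\lambda_0$ either $w^{\lambda_0}\equiv 0$, giving a symmetry hyperplane, or the process can be continued, contradicting minimality — unless $\lambda_0$ is pinned by the geometry. Doing this for every direction $e$ orthogonal to $x_N$, all the symmetry hyperplanes must pass through a common axis parallel to $e_N$ (otherwise one gets a translation invariance incompatible with (\ref{u_pos_cyl})), and after translating that axis to $\{x'=0\}$ one concludes $u_\delta(x)=w_\delta(|x'|,x_N)$. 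Strict monotonicity $(\nabla u_\delta(x),(x',0))>0$ for $x'\neq 0$ then follows from the Hopf lemma applied to $w^\lambda$ for $\lambda\in(0,\lambda_0)$ in each direction, exactly as in the classical argument.

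The main obstacle I anticipate is justifying the start of the moving planes procedure and the pinning of $\lambda_0$ in the absence of any decay of $u_\delta$ at infinity in the $x'$-variables: one only knows $u_\delta>z_2(\delta)$ outside $C_R$, not that $u_\delta\to z_3(\delta)$. The periodicity in $x_N$ is essential here — it lets one work on the compact slab $\R^{N-1}\times[0,T]$ (with $T$ the period) so that maxima of $-w^\lambda$, if positive, are attained, and it is exactly the hypothesis under which \cite{FMR} proves the needed symmetry statement. I would therefore structure the proof so that the technical heart is an application of that result (checking its hypotheses: boundedness, the sign condition $u_\delta>z_2(\delta)$ outside the cylinder, periodicity, and the relevant monotonicity of $f_\delta$ near $z_3(\delta)$), with the moving-planes bookkeeping across all horizontal directions and the identification of the common axis as the remaining, more routine, steps. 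The strict radial monotonicity is then a corollary of the strong maximum principle and Hopf lemma, and requires no new idea beyond what is already used for Proposition \ref{prop_rad_global}.
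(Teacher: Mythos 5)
Your plan ultimately rests, as the paper's does, on Theorem~2 of \cite{FMR} (Theorem~\ref{th_FMR}), but there is a genuine gap at exactly the point you flag as ``the main obstacle'' and then never resolve: you must first prove that $u_\delta\to z_3(\delta)$ as $|x'|\to\infty$, \emph{uniformly in} $x_N$. This is an actual hypothesis of Theorem~\ref{th_FMR} (the condition $v(y,z)\to 0$ as $|y|\to\infty$ uniformly in $z$, applied to $v:=z_3(\delta)-u_\delta$); it is not implied by periodicity, and it cannot be replaced by the assumption $u_\delta>z_2(\delta)$ outside $C_R$. Your suggestion that the linearization ``has a good sign near $z_3(\delta)$'' is circular here: knowing only $u_\delta\in(z_2(\delta),z_3(\delta))$ outside the cylinder puts the solution in a range where $f_\delta$ is \emph{not} monotone (it increases on $(z_2(\delta),1/\sqrt{3})$ and decreases only on $(1/\sqrt{3},z_3(\delta))$), so neither the start of the moving planes nor the maximum-principle comparison for $w^\lambda$ is justified without first pushing the solution up near $z_3(\delta)$ at infinity. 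Likewise, working on a compact slab $\R^{N-1}\times[0,T]$ gives you attainment of suprema but not the sign information needed to rule out positive maxima of $-w^\lambda$.

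The paper closes this gap with Lemma~\ref{lemma_dec_x'}: setting $w:=u_\delta-z_2(\delta)$, which satisfies $0<w<\beta$ in any half-space contained in $\R^N\setminus C_R$ by Proposition~\ref{prop_bounded}, one applies Lemma~2.3 of \cite{F1} to the nonlinearity $g(t)=-t(t-\alpha)(t-\beta)$, which is positive on $(0,\beta)$ with $g'(0)>0$; this yields $w\to\beta$, i.e.\ $u_\delta\to z_3(\delta)$, as the horizontal coordinate tends to infinity, uniformly in the remaining variables. Only after this step do the hypotheses of Theorem~\ref{th_FMR} hold (note $g'(s)\le 0$ near $0$ for the nonlinearity of $v$ because $f'(z_3(\delta))<0$), and then the theorem directly delivers both the radial symmetry in $x'$ and the strict radial monotonicity, so the additional moving-planes bookkeeping across horizontal directions and the ``common axis'' argument you sketch are not needed. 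Without an argument of the type of Lemma~\ref{lemma_dec_x'} (or some other proof of the uniform limit $u_\delta\to z_3(\delta)$ as $|x'|\to\infty$), your proposal does not go through.
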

We note that this proves point (\ref{u_symm}) of Theorem \ref{main_th}. Even in this case, our result agrees with the construction of \cite{HK}, where the authors prove the existence of a family of solutions fulfilling the symmetries of the Delaunay surface $D_\tau$, hence, in particular they are periodic in $x_N$, radially symmetric and radially increasing in $x'$. Here we show that \textit{any} periodic solution has to be radially symmetric and radially increasing in $x'$. Finally, in order to prove point (\ref{u_to-1}) of Theorem \ref{main_th}, we need the following result, which shows that the phase transition has to be complete.
\begin{proposition}
For any $\epsilon>0$ there exists $\delta_0\in(0,\frac{2}{3\sqrt{3}})$ such that, for any $\delta\in(0,\delta_0)$ and for any non constant solution $u_\delta$ to (\ref{cahn-hilliard}) satisfying $\sup_{\R^N}u_\delta=z_3(\delta)$, we have 
\begin{equation}
\inf_{\R^N} u_\delta <-1+\epsilon.
\end{equation}
\label{prop_inf-sup}
\end{proposition}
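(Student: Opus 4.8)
The natural strategy is a contradiction/compactness argument, reducing the statement to a rigidity property of the limiting Allen--Cahn equation. Suppose the conclusion fails for some $\epsilon>0$: there are $\delta_k\downarrow 0$ and non-constant solutions $u_k$ of $-\Delta u_k=f_{\delta_k}(u_k)$ with $\sup_{\R^N}u_k=z_3(\delta_k)$ but $\inf_{\R^N}u_k\ge -1+\epsilon$. The first step is to show that necessarily $\inf_{\R^N}u_k\le z_2(\delta_k)$; this already forces $u_k$ to be \emph{uniformly} non-constant, since then $\sup u_k-\inf u_k\ge z_3(\delta_k)-z_2(\delta_k)\to 1$ as $k\to\infty$. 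To prove it, set $v_k:=z_3(\delta_k)-u_k$, which is strictly positive by Remark \ref{rem_regularity}; it solves $-\Delta v_k=g_k(v_k)$ with $g_k(s):=-f_{\delta_k}(z_3(\delta_k)-s)$, and $g_k(s)<0$ whenever $s\in(0,z_3(\delta_k)-z_2(\delta_k))$, because $f_{\delta_k}>0$ on $(z_2(\delta_k),z_3(\delta_k))$. If one had $\sup v_k<z_3(\delta_k)-z_2(\delta_k)$, then $v_k$ would be subharmonic on all of $\R^N$; translating to points where $v_k$ nearly attains its supremum and passing to a $C^2_{loc}$ limit, one gets an entire solution with an interior maximum at a value $S\in(0,z_3(\delta_k)-z_2(\delta_k))$, where $0\ge -g_k(S)>0$ — a contradiction. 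Since $\inf v_k=z_3(\delta_k)-\sup u_k=0$, this gives $\sup v_k\ge z_3(\delta_k)-z_2(\delta_k)$, i.e.\ $\inf u_k\le z_2(\delta_k)$; in particular $\inf u_k\to m_\infty\in[-1+\epsilon,0]$ along a subsequence.

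Next, by Proposition \ref{prop_bounded} the $u_k$ are uniformly bounded, hence bounded in $C^{2,\alpha}_{loc}(\R^N)$ by elliptic estimates. Fixing an intermediate value $a\in(0,1)$ and using that, for $k$ large, $u_k$ takes values both $\le z_2(\delta_k)<a$ and arbitrarily close to $z_3(\delta_k)>a$ (intermediate value theorem), choose $x_k$ with $u_k(x_k)=a$; after translation and passing to a subsequence, $u_k(\cdot+x_k)\to w$ in $C^2_{loc}$, where $w$ is an entire bounded solution of the Allen--Cahn equation $-\Delta w=w-w^3$ with $-1+\epsilon\le w\le 1$ and $w(0)=a$, hence non-constant. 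The heart of the proof is to identify $w$ with (a rotation of) the one-dimensional heteroclinic profile $g_0$ ($-g_0''=g_0-g_0^3$, $g_0(\pm\infty)=\pm1$), which gives $\inf w=-1$ and therefore $\inf u_k<-1+\epsilon$ for $k$ large, the desired contradiction. The tool is Modica's gradient estimate: since $\inf u_k\ge -1+\epsilon> z_1(\delta_k)$ keeps $W_{\delta_k}(u_k)-W_{\delta_k}(z_3(\delta_k))\ge 0$ on the range of $u_k$ (one checks $W_{\delta_k}(z_3(\delta_k))=\min_{[\inf u_k,\,z_3(\delta_k)]}W_{\delta_k}$, using the first step and $\mu_{\delta_k}<-1+\epsilon$ below), one obtains $\tfrac12|\nabla u_k|^2\le W_{\delta_k}(u_k)-W_{\delta_k}(z_3(\delta_k))$, with equality approached as $u_k\to z_3(\delta_k)$; hence the `pressure' $P_k:=\tfrac12|\nabla u_k|^2-\big(W_{\delta_k}(u_k)-W_{\delta_k}(z_3(\delta_k))\big)$ satisfies $P_k\le 0$ and $\sup_{\R^N}P_k=0$. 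Passing to the limit gives $\tfrac12|\nabla w|^2\le W(w)$ with $\sup_{\R^N}\big(\tfrac12|\nabla w|^2-W(w)\big)=0$; a further translation to a point where this supremum is nearly attained produces an entire solution for which the supremum is attained at an interior point, and the strong maximum principle (for the elliptic differential inequality satisfied by the pressure) then forces equality everywhere, whence Modica's rigidity identifies it with a rotation of $g_0$. Alternatively one may compare $u_k$ with the one-dimensional homoclinic solution $H_{\delta_k}$ of $-H''=f_{\delta_k}(H)$, $H(\pm\infty)=z_3(\delta_k)$, whose minimum $\mu_{\delta_k}$ is characterized by $W_{\delta_k}(\mu_{\delta_k})=W_{\delta_k}(z_3(\delta_k))$ and satisfies $\mu_{\delta_k}\to -1$ as $\delta_k\to 0$ (indeed $1+\mu_{\delta_k}\sim\sqrt{2\delta_k}$), by a sliding argument.

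The main obstacle is precisely the passage to the limit in the condition $\sup u_k=z_3(\delta_k)$: a priori the compactness limit $w$ could degenerate to a small-amplitude periodic Allen--Cahn solution, for which $\tfrac12|\nabla w|^2-W(w)$ is a strictly negative constant, and then the bound $-1+\epsilon\le w\le 1$ alone yields no contradiction. Avoiding this requires choosing the translation points $x_k$ with care — for instance on the level set $\{u_k=a\}$ at minimal distance from the region where $u_k$ is close to $z_3(\delta_k)$ — and using the connectedness of $\R^N$ together with Modica's estimate to keep this `transition region' at bounded distance from $x_k$, so that $w$ genuinely attains values arbitrarily close to $1$, equivalently $\sup_{\R^N}\big(\tfrac12|\nabla w|^2-W(w)\big)=0$. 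This is the step I expect to be the most delicate; once it is in place, the rigidity argument above closes the contradiction, and Proposition \ref{prop_inf-sup} follows.
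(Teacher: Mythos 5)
Your preliminary steps are sound: boundedness, the observation that $\inf u_k\le z_2(\delta_k)$ (the subharmonicity/translation argument is correct), and the extraction of an entire Allen--Cahn limit $w$ with $w(0)=a$, non-constant since $a$ is not a zero of $f$. The genuine gap is exactly the step you yourself flag as ``the most delicate'': nothing in the proposal shows that the limit $w$ satisfies $\sup_{\R^N}\bigl(\tfrac12|\nabla w|^2-W(w)\bigr)=0$, i.e.\ that the region where $u_k$ is close to $z_3(\delta_k)$ stays at bounded distance from the basepoints $x_k$. Your suggested remedy (choose $x_k$ on $\{u_k=a\}$ at minimal distance from $\{u_k\approx z_3(\delta_k)\}$ and invoke connectedness plus Modica) is not an argument: between $\{u_k=a\}$ and $\{u_k\ge 1-\eta\}$ the solution need not remain in a compact subinterval of $(z_2(\delta_k),z_3(\delta_k))$ on which $f_{\delta_k}$ has a positive lower bound, so the superharmonicity trick that works in your first step does not bound that distance, and no other bound is offered. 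Moreover, even granting $\sup_{\R^N}P=0$ for $w$, the points where $P$ is nearly attained may escape to a region where $w$ tends to $1$; the second translated limit is then the constant $1$, for which Modica's equality holds trivially, and the rigidity theorem (which requires an equality point where $W(w)>0$, equivalently $\nabla w\neq 0$) produces no heteroclinic. So the contradiction is never reached: the whole difficulty of Proposition \ref{prop_inf-sup} is concentrated in precisely the step that is missing.

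The paper closes this step by a completely different mechanism, which avoids any localisation of the transition layer. From $\sup_{\R^N}u_{\delta_k}=z_3(\delta_k)$ it first produces (translating to near-supremum points; the limit is $\equiv 1$ by the strong maximum principle) balls $B_R(x^k)$ on which $u_{\delta_k}>1-\eps$. It then minimises the energy with the truncated potential $\tilde{W}_\delta$ to build a radial barrier $\beta_{R,\delta}$ on $B_R$ with boundary value $z_1(\delta)$ and $\tfrac{1}{\sqrt3}<\sup_{B_R}\beta_{R,\delta}<1-\eps$ uniformly for small $\delta$ (Lemma \ref{lemma_beta}); sliding this compactly supported barrier under $u_{\delta_k}$ propagates the bound and gives $u_{\delta_k}>\tfrac{1}{\sqrt3}$ on all of $\R^N$; finally the moving-planes Lemma \ref{lemma_const} forces $u_{\delta_k}\equiv z_3(\delta_k)$, contradicting non-constancy. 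This sliding-with-a-compact-barrier argument (the $\delta$-dependent analogue of Lemma $2.5$ of \cite{FV}) is what replaces your Modica-rigidity step; to complete your approach you would in effect have to prove such a statement anyway, so I recommend adopting the barrier-and-sliding strategy.
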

This result somehow parallels Lemma $2.5$ of \cite{FV}. The proof relies on both the moving planes and the sliding method. For a detailed proof of point (\ref{u_to-1}) of Theorem \ref{main_th}, we refer to section \ref{section_proofs}.

\section{The proofs}\label{section_proofs}

\setcounter{equation}{0}

\subsection{Boundedness}

In order to prove boundedness for distributional solutions to (\ref{cahn-hilliard}), we will rely on a result proved by Brezis in \cite{B}.
\begin{lemma}[Brezis-Kato inequality]
Let $p>1$ and assume that $v\in L^p_{loc}(\R^k)$ satisfies
\begin{eqnarray}\notag
-\Delta v+|v|^p v\le 0 &\text{in $\mathcal{D}'(\R^N)$.}
\end{eqnarray}
Then $v\le 0$ a.e. in $\R^N$.
\label{lemma_BK}
\end{lemma}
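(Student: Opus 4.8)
The plan is to show that $w:=\max(v,0)=v^+$ vanishes identically, which is equivalent to $v\le 0$ a.e.\ in $\R^N$. First observe that for the hypothesis to make sense as a distributional inequality one needs $|v|^pv\in L^1_{loc}(\R^N)$, i.e.\ $v\in L^{p+1}_{loc}(\R^N)$; granting this, the distribution $\Delta v-|v|^pv$ is nonnegative, hence is a nonnegative Radon measure $\mu$, so that $\Delta v=|v|^pv+\mu$ is a locally finite measure and standard elliptic estimates give $v\in W^{1,q}_{loc}(\R^N)$ for every $q<N/(N-1)$. The first key step is then Kato's inequality: for such $v$ one has $\Delta v^+\ge \chi_{\{v>0\}}\,\Delta v$ in $\mathcal{D}'(\R^N)$. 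Using $\mu\ge 0$ and the pointwise identity $\chi_{\{v>0\}}\,|v|^pv=(v^+)^{p+1}=w^{p+1}$, this yields
\begin{equation}\notag
-\Delta w+w^{p+1}\le 0\quad\text{in }\mathcal{D}'(\R^N),\qquad w\ge 0,\ \ w\in L^{p+1}_{loc}(\R^N).
\end{equation}
In particular $\Delta w\ge w^{p+1}\ge 0$, so $w$ is subharmonic and we work with its upper semicontinuous representative.

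The second, and main, ingredient is a Keller--Osserman--type a priori bound: if $w\ge 0$ satisfies $-\Delta w+w^{p+1}\le 0$ in a ball $B_\rho(x_0)$, then $w(x_0)\le C(N,p)\,\rho^{-2/p}$. This is obtained by comparison with the explicit radial supersolution $U(x)=A\,\bigl(\rho^2-|x-x_0|^2\bigr)^{-2/p}$. Since the exponents match, $\tfrac{2}{p}(p+1)=\tfrac{2}{p}+2$, a direct computation shows $\Delta U\le A\,C_1(N,p)\,\rho^2\,(\rho^2-|x-x_0|^2)^{-\frac2p-2}$, so choosing $A=A(N,p)\,\rho^{2/p}$ makes $U$ a genuine supersolution, $-\Delta U+U^{p+1}\ge 0$, on all of $B_\rho(x_0)$, while $U=+\infty$ on $\partial B_\rho(x_0)$. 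The weak comparison principle for these absorption-type inequalities then forces $w\le U$ in $B_\rho(x_0)$, and evaluating at the centre gives $w(x_0)\le U(x_0)=A(N,p)\,\rho^{2/p}\cdot\rho^{-4/p}=C(N,p)\,\rho^{-2/p}$. Alternatively, one may simply invoke the corresponding estimate of Brezis from \cite{B}.

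To conclude, note that the differential inequality for $w$ holds on all of $\R^N$ and $w\ge 0$ everywhere, so the a priori bound above is available for \emph{every} $\rho>0$ at any fixed point $x_0$. Letting $\rho\to\infty$ gives $w(x_0)=0$, and since $x_0$ is arbitrary, $w\equiv 0$, that is $v\le 0$ a.e.\ in $\R^N$.

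The step I expect to be the main obstacle is the rigorous justification in the low-regularity ($L^{p+1}_{loc}$, merely distributional) setting: ensuring Kato's inequality is applicable, and, above all, setting up the comparison argument correctly --- checking that $U$ is an admissible supersolution, that its blow-up on $\partial B_\rho(x_0)$ legitimately replaces a boundary condition, and that weak comparison is valid for distributional sub/supersolutions of $-\Delta\,\cdot+(\cdot)^{p+1}$. This is precisely the point at which citing \cite{B} is most convenient.
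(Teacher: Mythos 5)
The paper does not actually prove this lemma: it is imported directly from Brezis \cite{B} and used as a black box, so there is no internal argument to compare with. Your reconstruction is a correct and standard route to this Liouville-type result: Kato's inequality reduces matters to $w=v^+\ge 0$ with $\Delta w\ge w^{p+1}$ in $\mathcal{D}'(\R^N)$, and the Keller--Osserman barrier $U=A\bigl(\rho^2-|x-x_0|^2\bigr)^{-2/p}$ with $A\sim\rho^{2/p}$ gives $w(x_0)\le C(N,p)\,\rho^{-2/p}\to 0$ as $\rho\to\infty$; your exponent bookkeeping ($\tfrac{2}{p}(p+1)=\tfrac{2}{p}+2$, hence $U(x_0)=C\rho^{-2/p}$) is right. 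The two steps you flag as delicate are genuine but can be closed with standard tools: Kato's inequality when $\Delta v$ is only a measure is available in the form you need, since one only uses the lower bound by the absolutely continuous part $|v|^p v$ (Brezis--Ponce), and the comparison $w\le U$ does not require a general weak comparison principle: since $\Delta w\ge 0$, the upper semicontinuous representative of $w$ is subharmonic and hence locally bounded above, so $(w-U)^+$ vanishes near $\partial B_\rho(x_0)$ where $U$ blows up; a second application of Kato's inequality, together with the monotonicity of $t\mapsto t^{p+1}$ on $[0,\infty)$, shows $(w-U)^+$ is subharmonic in $B_\rho(x_0)$, and the maximum principle then gives $(w-U)^+\equiv 0$. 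Two minor remarks: as you observe, for the hypothesis to make sense one needs $v\in L^{p+1}_{loc}$, i.e.\ the statement is most naturally read with the nonlinearity $|v|^{p-1}v$ and $v\in L^p_{loc}$, which is Brezis's original formulation and what the paper actually applies (with a cubic nonlinearity); and your intermediate claim $v\in W^{1,q}_{loc}$ for $q<N/(N-1)$ is correct but not needed. In short, your argument is a legitimate self-contained alternative to the paper's one-line citation of \cite{B}: the citation buys brevity, while your barrier argument buys a transparent, quantitative pointwise bound.
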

Now we prove Proposition \ref{prop_bounded}.
\begin{proof}
Writing $-f_\delta(t)=(t-z_1(\delta))(t-z_2(\delta))(t-z_3(\delta))$ and setting 
\begin{eqnarray}\notag
\alpha:=z_1(\delta)-z_2(\delta)<0,\\\notag 
\beta:=z_3(\delta)-z_2(\delta)>0,\\\notag
w:=u_\delta-z_2(\delta),
\end{eqnarray}
we have
\begin{eqnarray}
\Delta w=\Delta u_\delta=(u_\delta-z_1(\delta))(u_\delta-z_2(\delta))(u_\delta-z_3(\delta))=w(w-\alpha)(w-\beta),\label{eq_w}
\end{eqnarray}
thus
\begin{eqnarray}\notag
\Delta (w-\beta)^+=\chi_{\{w>\beta\}}\Delta w=\chi_{\{w>\beta\}}w(w-\alpha)(w-\beta)\ge ((w-\beta)^+)^3,
\end{eqnarray}
where $\chi_{\{w>\beta\}}$ denotes the characteristic function of the set $\{\in\R^N:\, w(x)>\beta\}$. By the Kato-Brezis inequality (see Lemma \ref{lemma_BK}), we have $w\le \beta$. The same argument applied to $(\alpha-w)^+$ gives the lower bound $w\ge \alpha$. 
\end{proof}
\begin{remark}
A similar argument is used in \cite{F2} to prove boundedness for solutions to a class of vectorial equations of the form
$$\Delta u=u P'_n(|u|^2), \qquad P_n(t):=\frac{1}{2}\Pi_{j=1}^n (t-k_j)^2,$$
with $0<k_1<\dots<k_n$. The scalar Allen-Cahn equation is included in this class. Here we prove that a similar result is true for a slightly different non linearity, due to the presence of $\delta$.
\end{remark}
Now we can prove Proposition \ref{prop_delta>0}, using boundedness and a result of \cite{F1} where non-existence f ground states for some special non lineariries is proved.
\begin{proof}
By Lemma \ref{lemma_BK}, $z_1(\delta)\le u_\delta\le z_3(\delta)$, in particular, since $\delta\le 0$, $|z_1(\delta)|\le z_3(\delta)$, hence $|u_\delta|\le z_3(\delta)$. By Lemma \ref{lemma_dec_x'}, $u_\delta\to z_3(\delta)$ as $x_1\to\pm\infty$, the limit being uniform in $x'$. Moreover, setting $f_\delta(t):=f(t)-\delta$, we have 
\begin{itemize}
\item $f_\delta(t)\ge 0$, $\forall\,t\in(0,z_3(\delta))$,
\item $f_\delta(t)+f_{-\delta}(t)=-2\delta\ge 0$, $\forall\,t\in(0,z_3(\delta))$,
\item $f_\delta(t)$ is non increasing in a left neighbourhood of $z_3(\delta)$.
\end{itemize}
Therefore, by Theorem $4.2$ of \cite{F1}, $u_\delta\equiv z_3(\delta)$.
\end{proof}

\subsection{Radial symmetry}

The aim of this subsection is to prove Proposition \ref{th_rad}. In order to do so, we need some decay at infinity of the solution. From now on, we denote the variables by $x:=(x_1,x'')\in\R\times\R^{N-1}$. For $\lambda\in\R$, we set
\begin{eqnarray}
\Sigma_\lambda:=\{x\in\R^3:\, x_1<\lambda\}.\label{def_Sigma_lambda}
\end{eqnarray}
This changing of notation is justified by the fact that several times this section $x_N$ is the periodicity variable, hence we are not allowed to start the moving planes in that direction.
\begin{lemma}
Let $u_\delta$ be a solution to (\ref{cahn-hilliard}). Assume furthermore that $u_\delta>z_2(\delta)$ in the half-space $\R^N\backslash\Sigma_\lambda$, for some $\lambda\in\R$. Then 
\begin{eqnarray}
u(x_1,x'')\to z_3(\delta), &\text{as $x_1\to\infty$, uniformly in $x''$.}
\end{eqnarray}
\label{lemma_dec_x'}
\end{lemma}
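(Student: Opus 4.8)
The plan is to exploit a sliding/comparison argument together with the ODE analysis of the one-dimensional profile, combined with the boundedness already established in Proposition \ref{prop_bounded}. First I would recall that, by Proposition \ref{prop_bounded} and the strong maximum principle (Remark \ref{rem_regularity}), either $u_\delta$ is the constant $z_3(\delta)$, in which case there is nothing to prove, or $z_2(\delta)<u_\delta<z_3(\delta)$ on the half-space $\R^N\setminus\Sigma_\lambda$ with strict inequalities, and in particular $u_\delta$ is smooth with all derivatives bounded. The key observation is that on the set $\{u_\delta>z_2(\delta)\}$ the nonlinearity $f_\delta(t)=t-t^3-\delta=-(t-z_1)(t-z_2)(t-z_3)$ is positive for $t\in(z_2(\delta),z_3(\delta))$ and vanishes at $z_3(\delta)$, with $f_\delta'(z_3(\delta))<0$; so $z_3(\delta)$ is a stable zero and $u_\delta$ is a bounded subsolution-type object trapped between $z_2(\delta)$ and $z_3(\delta)$ in a half-space.

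The main step is to upgrade the pointwise bound $u_\delta<z_3(\delta)$ to the uniform limit $u_\delta(x_1,x'')\to z_3(\delta)$ as $x_1\to+\infty$. I would do this by a contradiction/compactness argument: suppose there is a sequence of points $y^{(k)}=(y_1^{(k)},{y''}^{(k)})$ with $y_1^{(k)}\to+\infty$ and $u_\delta(y^{(k)})\le z_3(\delta)-\eta$ for some fixed $\eta>0$. Translate, setting $v_k(x):=u_\delta(x+y^{(k)})$; by the uniform $C^{2,\alpha}_{loc}$ bounds coming from Proposition \ref{prop_bounded} and elliptic regularity (Remark \ref{rem_regularity}), a subsequence converges in $C^2_{loc}(\R^N)$ to an entire solution $v_\infty$ of \eqref{cahn-hilliard} which satisfies $z_2(\delta)\le v_\infty\le z_3(\delta)$ everywhere on $\R^N$ (since the shifted half-spaces $\R^N\setminus\Sigma_{\lambda-y_1^{(k)}}$ eventually exhaust $\R^N$) and $v_\infty(0)\le z_3(\delta)-\eta<z_3(\delta)$. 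I would then rule out such a $v_\infty$. Because $f_\delta\ge 0$ on $[z_2(\delta),z_3(\delta)]$, the function $z_3(\delta)-v_\infty\ge 0$ is a nonnegative bounded supersolution: $-\Delta(z_3(\delta)-v_\infty)=-f_\delta(v_\infty)\le 0$, i.e.\ $z_3(\delta)-v_\infty$ is subharmonic and bounded, hence by the mean value property and Harnack it cannot attain a value bounded away from its infimum unless it is constant; more carefully, one argues that a bounded entire solution with $z_2(\delta)\le v_\infty\le z_3(\delta)$ and $v_\infty\not\equiv z_3(\delta)$ forces, via a sweeping argument with the radial solutions $v_\delta$ of \eqref{prob_rad} or via Proposition \ref{prop_delta>0}-type reasoning, a contradiction with $v_\infty$ staying $\ge z_2(\delta)$ globally. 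The cleanest route is: $w:=z_3(\delta)-v_\infty$ satisfies $\Delta w = f_\delta(v_\infty)=f_\delta(z_3(\delta)-w)\ge 0$ and $0\le w\le z_3(\delta)-z_2(\delta)$; since $f_\delta(z_3(\delta)-w)\ge c\,w$ near $w=0$ for a constant $c>0$ (because $f_\delta'(z_3(\delta))<0$) and $f_\delta(z_3(\delta)-w)\ge 0$ throughout, $w$ is a bounded nonnegative subsolution of $\Delta w\ge 0$, hence $w\equiv \text{const}$ by the maximum principle at infinity is not quite immediate—instead I would invoke that a bounded subharmonic function on $\R^N$ need not be constant, so here one really needs the sign structure: integrate against test functions to get $\int f_\delta(v_\infty)\varphi^2 \le C\int|\nabla\varphi|^2$ and choose logarithmic cutoffs to conclude $\int f_\delta(v_\infty)=0$, forcing $v_\infty\equiv z_3(\delta)$, contradicting $v_\infty(0)<z_3(\delta)$.

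The main obstacle is precisely this last rigidity step: showing that an entire solution $v_\infty$ with $z_2(\delta)\le v_\infty\le z_3(\delta)$ must be identically $z_3(\delta)$. This is delicate because, for $\delta>0$, nonconstant bubble solutions \emph{do} exist, but those are \emph{not} bounded below by $z_2(\delta)$—the radial solution $v_\delta$ of \eqref{prob_rad} satisfies $w_\delta(0)<0<z_2(\delta)$ by Remark \ref{remark_w_delta_0<0}. So the constraint $v_\infty\ge z_2(\delta)$ is exactly what kills nontrivial solutions, and the argument must use it crucially. I would handle this by a sliding argument: compare $v_\infty$ with the radial solutions $v_\delta$ centered at far-away points and slide them; since any such radial comparison function dips below $z_2(\delta)$ near its center while $v_\infty\ge z_2(\delta)$, one derives that $v_\infty$ cannot be touched from below by a translate of $v_\delta$ unless it coincides with $z_3(\delta)$. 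Alternatively, and perhaps more simply, one observes that $v_\infty\ge z_2(\delta)$ together with $f_\delta\ge 0$ on $[z_2(\delta),z_3(\delta)]$ makes $-\Delta v_\infty = f_\delta(v_\infty)\ge 0$, so $v_\infty$ is superharmonic and bounded below, hence (being also bounded above) one shows $\inf v_\infty$ is attained or approached and the strong maximum principle gives $v_\infty\equiv\inf v_\infty$, a constant; the only constants in $[z_2(\delta),z_3(\delta)]$ solving the equation are $z_2(\delta)$ and $z_3(\delta)$, and $z_2(\delta)$ is excluded by the hypothesis $u_\delta>z_2(\delta)$ being strict, leaving $v_\infty\equiv z_3(\delta)$. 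Once the uniform limit is established, the statement of Lemma \ref{lemma_dec_x'} follows immediately. I expect the bookkeeping in the compactness/translation step and the careful justification of "superharmonic and bounded implies constant" (which is false without the lower bound, true with it via e.g.\ a Liouville-type argument using the integral estimate with logarithmic cutoffs in dimension $N\ge 2$) to be the technical heart of the proof.
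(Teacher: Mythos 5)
Your overall strategy (compactness along translates plus a Liouville-type classification of the limit) is reasonable, but the decisive step has a genuine gap. The limit $v_\infty$ of the translates $u_\delta(\cdot+y^{(k)})$ only satisfies $z_2(\delta)\le v_\infty\le z_3(\delta)$, and the constant $v_\infty\equiv z_2(\delta)$ is a perfectly admissible limit; your attempt to exclude it --- ``$z_2(\delta)$ is excluded by the hypothesis $u_\delta>z_2(\delta)$ being strict'' --- does not work, because the strict pointwise inequality on the half-space is not uniform: a priori $u_\delta$ could tend to $z_2(\delta)$ (rather than $z_3(\delta)$) along some sequence with $x_1\to\infty$, and ruling this out is exactly the content of the lemma. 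What your argument never uses is that $z_2(\delta)$ is an \emph{unstable} zero from above, i.e. $f_\delta'(z_2(\delta))>0$ (equivalently $g'(0)>0$ for $g(t)=-t(t-\alpha)(t-\beta)$ acting on $w=u_\delta-z_2(\delta)$). This is the hypothesis that permits compactly supported positive subsolutions (bumps on large balls, which exist because the principal Dirichlet eigenvalue can be made smaller than $g'(0)$) to be slid inside the half-space, yielding a uniform lower bound $u_\delta\ge z_2(\delta)+c_0$ far from $\partial\Sigma_\lambda$; only after that does your translation-to-the-infimum argument (at an interior minimum of a limit, $f_\delta(m)\le 0$, hence $m=z_3(\delta)$) close the proof. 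Without a condition of this type the claimed rigidity is simply false for general ``monostable'' nonlinearities that are positive on the interval but degenerate at the lower zero (e.g. $g(t)=t^p$ with $p$ large admits positive bounded entire solutions decaying to $0$), so this omission is not a technicality.

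Your auxiliary rigidity routes also do not hold as stated: a bounded superharmonic function on $\R^N$ need not be constant for $N\ge 3$, and the cutoff estimate of the form $\int f_\delta(v_\infty)\,(z_3(\delta)-v_\infty)\,\varphi^2\le C\int|\nabla\varphi|^2$ forces the left-hand side to vanish only in dimension $N=2$ (logarithmic cutoffs); for $N\ge 3$ it only gives a bound of order $R^{N-2}$ and no Liouville conclusion, while the lemma is needed for all $N\ge 2$. For comparison, the paper's proof is a one-line application of Lemma $2.3$ of \cite{F1} to $w:=u_\delta-z_2(\delta)$ in the half-space $\R^N\setminus\Sigma_\lambda$, and the hypotheses of that lemma are precisely $g>0$ on $(0,\beta)$ \emph{together with} $g'(0)>0$ --- the condition missing from your proposal.
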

\begin{proof}
The statement is trivial if $u_\delta$ is constant (see Remark \ref{rem_regularity}), hence we can assume that it is non constant. We apply Lemma $2.3$ of \cite{F1} to $w:=u_\delta-z_2(\delta)$ in the half space $\R^N\backslash \Sigma_\lambda$, where, by Lemma \ref{lemma_BK}, $0<w<\beta$. This is possible since the non linearity $g(t):=-t(t-\alpha)(t-\beta)$ is positive in $(0,\beta)$ and $g'(0)>0$. We recall that the constants $\alpha$ and $\beta$ are defined in the proof of Proposition \ref{prop_bounded}. The conclusion is that
\begin{eqnarray}\notag
w(x_1,x'')\to\beta &\text{as $x_1\to \infty$,}
\end{eqnarray}
and the limit is uniform in the other variables. 
\end{proof}
Using the fact that $f'(z_3(\delta))<0$, we can actually prove a better result about the decay rate of $z_3(\delta)-u_\delta$.
\begin{lemma}
Let $u_\delta$ be a solution to (\ref{cahn-hilliard}) such that $u_\delta>z_2(\delta)$ in the half space $\R^N\backslash \Sigma_\lambda$, for some $\lambda\in\R$. Then, for any $\gamma\in (0,\sqrt{-f'(z_3(\delta))})$, there exists a constant $C(\gamma)>0$, depending on $\gamma$, such that
\begin{eqnarray}
0<z_3(\delta)-u_\delta(x_1,x'')\le C(\gamma) e^{-\gamma x_1}, \qquad\forall\, x=(x_1,x'')\in \R^N\backslash\Sigma_\lambda.\label{exp_dec}
\end{eqnarray}
\label{lemma_exp_dec}
\end{lemma}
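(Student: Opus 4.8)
The plan is to combine the uniform convergence $u_\delta(x_1,x'')\to z_3(\delta)$ from Lemma \ref{lemma_dec_x'} with a linearisation of the equation near the stable zero $z_3(\delta)$, and then to dominate $z_3(\delta)-u_\delta$ by an explicit exponential supersolution on a half-space. Write $v:=z_3(\delta)-u_\delta$, so that $v>0$ on $\R^N\backslash\Sigma_\lambda$ and $v\to 0$ uniformly as $x_1\to\infty$. Since $-\Delta u_\delta=f_\delta(u_\delta)$ and $f_\delta(z_3(\delta))=0$, a Taylor expansion gives
\begin{equation}\notag
-\Delta v = -f_\delta(z_3(\delta)-v)-\big(-\Delta z_3(\delta)\big) = f'(z_3(\delta))\,v + o(v),
\end{equation}
where I used $f_\delta'=f'$ and $f'(z_3(\delta))<0$. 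More precisely, fix $\gamma\in(0,\sqrt{-f'(z_3(\delta))})$ and pick $\mu$ with $\gamma^2<\mu^2<-f'(z_3(\delta))$; by uniform continuity of $f'$ there is $\eta>0$ so that $f'(t)\le -\mu^2$ for $t\in[z_3(\delta)-\eta,z_3(\delta)]$, hence by the mean value theorem $-\Delta v=f_\delta(z_3(\delta))-f_\delta(z_3(\delta)-v)\le -\mu^2 v$ on the region where $0<v\le\eta$.

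Next I would use the uniform decay to find $L\ge\lambda$ such that $0<v(x)\le\eta$ for all $x$ with $x_1\ge L$; on the half-space $\{x_1\ge L\}$ we therefore have the differential inequality $\Delta v\ge \mu^2 v>0$, so $v$ is subharmonic there in the shifted sense. The comparison function is $\psi(x):=M e^{-\gamma(x_1-L)}$, which satisfies $\Delta\psi=\gamma^2\psi<\mu^2\psi$, i.e. $\Delta(v-\psi)\ge \mu^2(v-\psi)$ on $\{x_1>L\}$. Choosing $M:=\sup_{x_1=L}v\le \beta<\infty$ (finite by Proposition \ref{prop_bounded}) gives $v-\psi\le 0$ on the boundary hyperplane $\{x_1=L\}$, and $v-\psi\to 0$ (in fact $v\to0$, $\psi\to0$) as $x_1\to\infty$ uniformly in $x''$. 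A maximum-principle argument on the unbounded slab — if $v-\psi$ had a positive supremum it would have to be approached in the interior, contradicting $\Delta(v-\psi)\ge\mu^2(v-\psi)>0$ there — yields $v\le\psi$ on $\{x_1\ge L\}$, that is $z_3(\delta)-u_\delta(x)\le M e^{\gamma L}e^{-\gamma x_1}$ there. On the bounded remaining strip $\{\lambda< x_1<L\}\cap(\R^N\backslash\Sigma_\lambda)$ the quantity $z_3(\delta)-u_\delta$ is bounded by $\beta$ and $e^{-\gamma x_1}$ is bounded below, so enlarging the constant handles that region; setting $C(\gamma)$ to be the larger of the two constants gives \eqref{exp_dec}. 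The strict positivity $z_3(\delta)-u_\delta>0$ is just Remark \ref{rem_regularity} together with the hypothesis $u_\delta>z_2(\delta)$, since a non-constant solution cannot touch $z_3(\delta)$.

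The main obstacle is making the maximum principle rigorous on the unbounded half-space $\{x_1\ge L\}\times\R^{N-1}_{x''}$, where the standard maximum principle does not apply directly. The uniform-in-$x''$ decay of both $v$ and $\psi$ is exactly what rescues this: for any $\sigma>0$ the set $\{v-\psi\ge\sigma\}$ is contained in a bounded slab $\{L\le x_1\le L_\sigma\}$ in the $x_1$-direction, and one can either invoke a Phragmén–Lindelöf type statement or, more elementarily, slide the comparison $\psi$ a little (replace $\psi$ by $\psi+\sigma$ and let $\sigma\downarrow0$) combined with the fact that $w\mapsto\Delta w-\mu^2 w$ satisfies the maximum principle on any domain. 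I would also need to double-check that $L$, $\eta$, $\mu$ and hence $C(\gamma)$ depend only on $\gamma$, $\delta$ and the solution through the uniform modulus of convergence supplied by Lemma \ref{lemma_dec_x'}; since Lemma \ref{lemma_dec_x'} is invoked with no quantitative rate, the constant $C(\gamma)$ is allowed to depend on $u_\delta$, which is consistent with the statement.
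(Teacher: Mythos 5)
Your argument is correct and is essentially the paper's proof: both compare $v=z_3(\delta)-u_\delta$ with the barrier $Ce^{-\gamma x_1}$ on a far half-space $\{x_1>M\}$ where, by Lemma \ref{lemma_dec_x'}, the linearisation at $z_3(\delta)$ makes $(-\Delta+\gamma^2)(v-Ce^{-\gamma x_1})\le 0$, then adjust the constant on the remaining bounded-in-$x_1$ region. The only delicate step you flag, the maximum principle on the unbounded half-space for the operator $\Delta-\mu^2$ applied to the bounded function $v-\psi$, is exactly what the paper settles by citing Lemma $2.1$ of \cite{BCN}, so your Phragm\'en--Lindel\"of/sliding fix is the same ingredient.
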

\begin{proof}
We compare the bounded function $v:=z_3(\delta)-u_\delta$ with the barrier $\mu e^{-\gamma x_1}$, for $\gamma\in (0,\sqrt{-f'(z_3(\delta))})$, in the half-space $\R^N\backslash\Sigma_M$, with $M>0$ large enough. In fact, on $\partial(\R^N\backslash\Sigma_M)$, we have
$$v(x)\le\|v\|_{L^\infty(\R^N)} \le\mu e^{-\gamma M},$$
provided $\mu\ge \|v\|_{L^\infty(\R^N)}e^{\gamma M}$. Note that here we use the fact that $v\in L^\infty$, which is true by Lemma \ref{lemma_BK}. Moreover, setting $h_\delta(v):=-f_\delta(z_3(\delta)-v)$, we have $h_\delta(0)=-f_\delta(z_3(\delta))=0$ and $h'_\delta(0)=f'(z_3(\delta))<0$, thus
$$(-\Delta+\gamma^2)(v-\mu e^{\gamma x_1})=h_\delta(v)+\gamma^2 v\le 0$$
in $\R^N\backslash\Sigma_M$ if $M$ is large enough, since, by Lemma \ref{lemma_dec_x'}, $z_3(\delta)-v$ is decaying as $x_1\to\infty$, uniformly with respect to $x''$. Thus, by the maximum principle for possibly unbounded domains (see Lemma $2.1$ of \cite{BCN}), we conclude that (\ref{exp_dec}) is true in $\R^N\backslash\Sigma_M$. Changing, if necessary, the constant $C(\gamma)$, the required inequality is fulfilled in the whole space.
\end{proof}
Now we prove Proposition \ref{prop_rad_global}
\begin{proof}
By Proposition \ref{prop_bounded}, $z_1(\delta)<u_\delta<z_3(\delta)$ and, by Remark \ref{rem_regularity}, $u_\delta$ is smooth. By Lemma \ref{lemma_dec_x'}, it converges to $z_3(\delta)$ as $|x|\to\infty$, therefore, by the famous symmetry result by \cite{GNN}, or by Theorem $2$ of \cite{FMR}, we conclude that $u_\delta$ is radially symmetric and radially decreasing. 
\end{proof}
Now we prove Proposition \ref{prop_uniqueness}.
\begin{proof}
Since, by Proposition \ref{prop_rad_global}, $u_\delta$ is radially symmetric and radially decreasing, then, up to translation, we have $u_\delta(0)=\min_{\R^N} u_\delta$. Since, by Lemma \ref{lemma_dec_x'}, $u_\delta(x)\to z_3(\delta)$ as $|x|\to\infty$, then it solves (\ref{prob_rad}), therefore, by uniqueness, $u_\delta=v_\delta$.
\end{proof}
In order to prove Proposition \ref{th_rad}, we need to apply Theorem $2$ of \cite{FMR}, which we recall, for the reader's convenience.
\begin{theorem}[\cite{FMR}]
Let $v>0$ be a bounded entire solution to 
$$-\Delta v=g(v)$$
in $\R^N$, with $g\in C^1(\R)$ such that $g'(s)\le 0$ in $(0,\eta)$, for some $\eta>0$. Writing $x=(y,z)\in\R^M\times\R^{N-M}$, we assume that
\begin{itemize}
\item $v(y,z)\to 0$ as $|y|\to\infty$, uniformly in $z$.
\item $v$ is periodic in $z$.
\end{itemize}
Then $v$ is radially symmetric in $y$, that is, up to a translation, $v(y,z)=w(|y|,z)$, and radially decreasing in $y$, that is $\partial_{y_j} v(y,z)<0$ for any $x=(y,z)\in\R^M\times\R^{N-M}$ with $y\ne 0$.\label{th_FMR}
\end{theorem}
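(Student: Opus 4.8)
The plan is to run the moving planes method in the variables $y\in\R^M$ only, never moving a plane in the periodic directions $z$. Since the data of the problem — the equation $-\Delta v=g(v)$, the positivity and boundedness of $v$, and the uniform decay $v(y,z)\to0$ as $|y|\to\infty$ — are invariant under rotations acting on the $\R^M$ factor, it suffices to carry out the procedure in the direction $e_1\in\R^M$ and then combine the resulting symmetries. For $\lambda\in\R$ set $\Sigma_\lambda:=\{x=(y,z):y_1>\lambda\}$, let $x^\lambda:=(2\lambda-y_1,y_2,\dots,y_M,z)$ be the reflection across $T_\lambda:=\partial\Sigma_\lambda$, put $v_\lambda(x):=v(x^\lambda)$ and $w_\lambda:=v_\lambda-v$ on $\overline{\Sigma_\lambda}$. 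By interior elliptic estimates $v\in C^{2,\alpha}$ with $\nabla v$ bounded, and $w_\lambda$ solves the linear equation $-\Delta w_\lambda=c_\lambda(x)\,w_\lambda$ in $\Sigma_\lambda$, where $c_\lambda(x):=\int_0^1 g'\bigl(tv_\lambda(x)+(1-t)v(x)\bigr)\,dt$ is bounded (as $g\in C^1$ and $v$ is bounded), $w_\lambda\equiv0$ on $T_\lambda$, and $w_\lambda$ is periodic in $z$ (the reflection leaves $z$ fixed).

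I would first show $w_\lambda\ge0$ in $\Sigma_\lambda$ for all large $\lambda$. Fix $\epsilon\in(0,\eta)$ and, using the uniform decay, choose $R$ with $v<\epsilon$ on $\{|y|\ge R\}$. For $\lambda>R$ one has $v<\epsilon$ throughout $\Sigma_\lambda$; on the would-be negativity set $D:=\{x\in\Sigma_\lambda:w_\lambda(x)<0\}$ we then have $0<v_\lambda<v<\epsilon\le\eta$, so the argument of $g'$ in the integral defining $c_\lambda$ lies in $(0,\eta)$ and hence $c_\lambda\le0$ on $D$. Consequently $-\Delta w_\lambda=c_\lambda w_\lambda\ge0$ on $D$, i.e. $w_\lambda$ is superharmonic there; moreover $w_\lambda$ vanishes on the finite part of $\partial D$ and on $T_\lambda$, while $|w_\lambda|\le v$ on $D$ forces $w_\lambda\to0$ along any sequence leaving every $y$-ball — and by periodicity in $z$ there is no other way to escape to infinity. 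The minimum principle for bounded superharmonic functions (equivalently Lemma~2.1 of \cite{BCN}, whose hypotheses hold here because $c_\lambda\le0$ makes the constant function a positive supersolution) then gives $D=\emptyset$. Now let $\mu:=\inf\{\lambda_0:\ w_\lambda\ge0 \text{ in }\Sigma_\lambda\ \text{for all }\lambda\ge\lambda_0\}$; this is $>-\infty$, since otherwise letting $\lambda\to-\infty$ in $v(x^\lambda)\ge v(x)$ and using $v\to0$ would force $v\le0$, contradicting $v>0$. By continuity $w_\mu\ge0$ in $\Sigma_\mu$.

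I claim $w_\mu\equiv0$. If not, writing the equation as $(-\Delta+c_\mu^-)w_\mu=c_\mu^+w_\mu\ge0$ and invoking the strong maximum principle gives $w_\mu>0$ in $\Sigma_\mu$, and Hopf's lemma on $T_\mu$ yields $\partial_{y_1}w_\mu>0$ there. One then wants $w_{\mu-\tau}\ge0$ in $\Sigma_{\mu-\tau}$ for all small $\tau>0$, which contradicts the minimality of $\mu$. To obtain this I would decompose $\Sigma_{\mu-\tau}$ into three pieces: a thin slab $\{\mu-\tau<y_1<\mu+\rho\}$ adjacent to the reflection plane, with $\rho$ fixed once and for all so small that the first Dirichlet eigenvalue of $-\Delta$ across a slab of width $<2\rho$ exceeds the uniform bound on $|c_\lambda|$, so that the maximum principle for narrow domains applies there irrespective of unboundedness in $y''$ and $z$; a bounded-in-$y$ core $\{y_1\ge\mu+\rho,\ |y|\le R\}$, on which $w_\mu$ is bounded below by a positive constant by compactness modulo the $z$-period, whence $w_{\mu-\tau}>0$ there for $\tau$ small by uniform continuity of $v$; and the far region $\{|y|\ge R\}$, where, exactly as in the starting step, $c_{\mu-\tau}\le0$ on the negativity set and the superharmonic/decay argument applies. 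Ruling out a negativity set then amounts to patching these three estimates together, reconciling the boundary values along the interfaces. This patching, in the presence of non-compactness in both the $y''$- and periodic directions simultaneously, is the main technical obstacle of the proof.

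Once $w_\mu\equiv0$, we have $v(x)=v(x^\mu)$: $v$ is symmetric in $y_1$ about $\{y_1=\mu\}$. Moreover $w_\lambda>0$ in $\Sigma_\lambda$ for every $\lambda>\mu$ — for if $w_\lambda\equiv0$ for some $\lambda>\mu$ then $v$ would be periodic in $y_1$, contradicting the decay — and Hopf's lemma on $T_\lambda$ then gives $\partial_{y_1}v<0$ on $\{y_1=\lambda\}$ for all $\lambda>\mu$, i.e. strict monotonicity in the $e_1$-direction. Running the same argument in every direction $e\in S^{M-1}\subset\R^M$ produces a hyperplane of symmetry orthogonal to $e$; by the standard argument these hyperplanes all pass through a common point $y_0$, which we translate to the origin, and symmetry of $v$ about every hyperplane through $0$ in the $y$-variables gives $v(y,z)=w(|y|,z)$. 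The directional monotonicity then translates into $(\nabla_y v(y,z),y)<0$ for $y\ne0$, i.e. $w(\cdot,z)$ is strictly radially decreasing, which is the asserted conclusion.
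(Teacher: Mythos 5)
First, a contextual remark: the paper does not prove this statement at all --- it is Theorem 2 of \cite{FMR}, recalled verbatim ``for the reader's convenience'' and then used as a black box in the proof of Proposition \ref{th_rad}. So there is no in-paper argument to measure yours against; what can be said is that your outline (moving planes in the $y$-variables only, exploiting the uniform decay in $y$ together with compactness modulo the $z$-periods) is precisely the strategy the paper attributes to \cite{FMR}, and most of your individual steps are sound: the starting step for large $\lambda$ via $c_\lambda\le 0$ on the negativity set, the lower bound $\mu>-\infty$, the dichotomy between $w_\mu\equiv 0$ and $w_\mu>0$ with Hopf's lemma giving strict monotonicity for $\lambda>\mu$, and the passage from a symmetry hyperplane in every direction to radial symmetry about a common centre.

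The genuine gap is the one you flag yourself: the proof that the plane can be pushed past the critical position, i.e.\ that $w_{\mu-\tau}\ge 0$ in $\Sigma_{\mu-\tau}$ for all small $\tau>0$. Your three-region decomposition (narrow slab, compact-modulo-periods core, far region) identifies the right ingredients, but the maximum principle cannot be applied separately on the slab and on the far region: a single connected component of the negativity set $D_\tau:=\{w_{\mu-\tau}<0\}$ may meet both pieces, and on the internal interfaces $\{y_1=\mu+\rho\}$ and $\{|y|=R\}$ you have no sign information on $w_{\mu-\tau}$, so neither the narrow-domain principle nor the $c\le 0$ principle sees admissible boundary data on its own piece. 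The ``patching'' is therefore not bookkeeping but the actual content of the lemma. What is needed is a single maximum principle valid on all of $D_\tau$ (which, once the core is excluded, sits inside the union of a narrow slab and a region where the linearised coefficient is nonpositive); this is typically achieved either by constructing one positive supersolution $\phi$ of $\Delta\phi+c_{\mu-\tau}\phi\le 0$ on that whole union, so that a \cite{BCN}-type principle applies in one stroke, or by a minimizing-sequence argument that uses the $z$-periodicity to turn a negative infimum of $w_{\mu-\tau}$ into a genuine interior minimum and the decay in $y$ to force that minimum into the region where the coefficient has a sign, after which one compares with $w_\mu$ and Hopf's lemma as $\tau\to 0$. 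Until one of these is written out, your text is an accurate road map of the argument in \cite{FMR} rather than a proof.
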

\begin{proof}
By Proposition \ref{prop_bounded}, $z_1(\delta)<u_\delta<z_3(\delta)$ and, by Remark \ref{rem_regularity}, $u_\delta$ is smooth. By Lemma \ref{lemma_dec_x'}, it converges to $z_3(\delta)$ as $|x'|\to\infty$, uniformly in $x_N$. 
Since $u_\delta$ is periodic, in order to conclude that it is radially symmetric in $x'$ and radially decreasing, it is enough to apply Theorem \ref{th_FMR} to $v:=z_3(\delta)-u_\delta$. 
\end{proof}

\subsection{The asymptotic behaviour for $\delta$ small}
First we show that if a solution lies between $1/\sqrt{3}$ and $z_3(\delta)$, then it is constant. This is proved by the moving planes method.
\begin{lemma}
Let $\delta\in[0,2/3\sqrt{3})$ and let $u_\delta$ be a solution to (\ref{cahn-hilliard}) in $\R^N$ such that $u_\delta(x)\ge 1/\sqrt{3}$, for any $x\in\R^N$. Then $u_\delta\equiv z_3(\delta)$.
\label{lemma_const}
\end{lemma}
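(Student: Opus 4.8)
The plan is to use the moving planes method in an arbitrary direction, say $x_1$, to show that $u_\delta$ is monotone in that direction, and then — since the direction is arbitrary and $u_\delta$ is globally bounded — deduce that $u_\delta$ is constant, after which equation (\ref{cahn-hilliard}) forces the constant to be a root of $f_\delta$ lying in $[1/\sqrt{3},\infty)$, i.e. $z_3(\delta)$. The key structural input is that, by the hypothesis $u_\delta \ge 1/\sqrt{3}$ together with boundedness (Proposition \ref{prop_bounded}, giving $u_\delta \le z_3(\delta)$), the solution takes values in the interval $[1/\sqrt{3}, z_3(\delta)]$, on which the nonlinearity $f_\delta(t) = t - t^3 - \delta$ satisfies $f_\delta'(t) = 1 - 3t^2 \le 0$. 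This monotonicity of $f_\delta$ on the relevant range is exactly what makes the moving planes comparison work without any decay assumption.

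The steps, in order. First I would fix the direction $e_1$ and, for $\lambda \in \R$, set $\Sigma_\lambda = \{x_1 < \lambda\}$, $u_\lambda(x) := u_\delta(2\lambda - x_1, x'')$, and $w_\lambda := u_\lambda - u_\delta$ on $\Sigma_\lambda$, which satisfies $-\Delta w_\lambda = f_\delta(u_\lambda) - f_\delta(u_\delta) = c_\lambda(x) w_\lambda$ with $c_\lambda(x) = \int_0^1 f_\delta'(t u_\lambda + (1-t) u_\delta)\,dt \le 0$ since both $u_\lambda$ and $u_\delta$ take values in $[1/\sqrt{3}, z_3(\delta)]$. Thus $(-\Delta - c_\lambda) w_\lambda = 0$ with $-c_\lambda \ge 0$, so $w_\lambda$ is a solution of an equation with a good-signed zeroth-order coefficient on $\Sigma_\lambda$, with $w_\lambda \ge 0$ on $\partial\Sigma_\lambda$ (indeed $w_\lambda = 0$ there). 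The standard moving planes machinery for unbounded domains (maximum principle à la Lemma $2.1$ of \cite{BCN}, applied to bounded pieces and using that $w_\lambda$ is bounded) then gives $w_\lambda \ge 0$ in $\Sigma_\lambda$ for every $\lambda$. Here, since there is no spatial asymmetry forcing a "starting" hyperplane, one actually gets $w_\lambda \ge 0$ for all $\lambda \in \R$ by letting $\lambda \to -\infty$ is vacuous; rather, the correct conclusion is obtained by noting the argument is symmetric under $e_1 \mapsto -e_1$, so $u_\delta$ is symmetric about every hyperplane $\{x_1 = \lambda\}$, which forces $u_\delta$ to be independent of $x_1$. Since $e_1$ was arbitrary, $u_\delta$ is constant.

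Finally, a constant solution $u_\delta \equiv c$ of (\ref{cahn-hilliard}) must satisfy $f_\delta(c) = 0$, so $c \in \{z_1(\delta), z_2(\delta), z_3(\delta)\}$; the constraint $c \ge 1/\sqrt{3}$ together with $z_1(\delta) < -1/\sqrt{3}$ and $z_2(\delta) < 1/\sqrt{3}$ leaves only $c = z_3(\delta)$. I expect the main obstacle to be the careful execution of the moving planes step on the unbounded slab $\Sigma_\lambda$ without any decay hypothesis on $u_\delta$: one must invoke a maximum principle valid in unbounded domains (exploiting the sign $c_\lambda \le 0$, boundedness of $w_\lambda$, and a sweeping/exhaustion argument), and then correctly extract from "symmetry about every hyperplane orthogonal to $e_1$" the conclusion that $u_\delta$ does not depend on $x_1$. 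Everything else — the sign computation for $c_\lambda$, the boundedness input from Proposition \ref{prop_bounded}, and the final root-counting — is routine.
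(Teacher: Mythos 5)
Your proposal is correct and follows essentially the same route as the paper: a moving-planes comparison in an arbitrary direction, made possible for \emph{every} $\lambda$ at once because $f_\delta'\le 0$ on $[1/\sqrt{3},z_3(\delta)]$ gives the zeroth-order coefficient the good sign, combined with the maximum principle for unbounded domains of \cite{BCN}, monotonicity/symmetry in all directions, and finally identification of the constant as $z_3(\delta)$. The paper phrases the comparison for $v=z_3(\delta)-u_\delta$ via a contradiction on the set $\{v<v_\lambda\}$ and then rotates directions, but this is only a cosmetic difference from your direct argument for $w_\lambda=u_\lambda-u_\delta$ on the half-space.
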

\begin{proof}
We set $v:=z_3(\delta)-u_\delta$. Setting, for any $\lambda\in\R$, $v_\lambda(x):=v(2\lambda-x_1,x'')$, we have 
\begin{eqnarray}
\label{start_moving_plane}
\text{$v-v_\lambda\ge 0$ in $\Sigma_\lambda$, for any $\lambda\in\R$.}
\end{eqnarray}
In order to prove this fact, we assume by contradiction that there exists $\lambda\in\R$ such that the open set $\Omega_\lambda:=\{x\in\Sigma_\lambda:v-v_\lambda<0\}$ is nonempty, and we observe that, in any connected component $\omega$ of $\Omega_\lambda$ we have
$$
\begin{cases}
-\Delta(v-v_\lambda)=h_\delta(v)-h_\delta(v_\lambda)<0 \qquad\text{in $\omega$,}\\
v-v_\lambda=0 \qquad\text{on $\partial\omega$,}
\end{cases}
$$
due to the strict monotonicity of $f_\delta$ in $[1/\sqrt{3},1)$ (for the definition of $h_\delta$, see the proof of Lemma \ref{lemma_exp_dec}). As a consequence, by the maximum principle for possibly unbounded domains, we have $v-v_\lambda\le 0$ in $\omega$, a contradiction.\\

By (\ref{start_moving_plane}), we have $\partial_{x_1}v\le 0$ in $\R^N$. The same argument applied to $\tilde{v}(x):=v(-x_1,x')$ implies that also $\tilde{v}$ satisfies (\ref{start_moving_plane}), hence $\partial_{x_1}v\ge 0$ in $\R^N$, thus $\partial_{x_1}v\equiv 0$. Composing $v$ with any rotation of $\R^N$, we conclude that $v$ is a constant solution to (\ref{cahn-hilliard}), thus $v\equiv 0$.
\end{proof}
Given the double well potential $W(t)=\frac{(1-t^2)^2}{4}$, $0<\alpha<W(1/\sqrt{3})=\frac{1}{9}$ and $\delta\in(0,2/3\sqrt{3})$, we set
$$\mu(\delta):=\max\{\mu<0:\, W_\delta(\mu)=\alpha\}.$$
Moreover, we take a smooth cutoff function $\chi:\R\to[0,1]$ such that $\chi=1$ in $(-\infty,-1)$ and $\chi=0$ in $(0,\infty)$ and we set 
\begin{equation}
\tilde{W}_\delta:=\chi_\delta \alpha+(1-\chi_\delta)W_\delta, \qquad \chi_\delta(t):=\chi(-t/\mu(\delta)).\label{def_tilde_W}
\end{equation}
We will denote $\tilde{W}:=\tilde{W}_0$. It is possible to see that $\tilde{W}_\delta$ enjoys the following properties:
\begin{equation}
\tilde{W}_\delta\to \tilde{W}, \qquad\text{as $\delta\to 0$, uniformly on compact subsets of $\R$,}
\end{equation}
\begin{equation}
\tilde{W}_\delta(r)=W_\delta(r), \qquad\text{for any $r\ge 0$ and $\delta\in[0,2/3\sqrt{3})$,}
\label{tildeW_right}
\end{equation}
and
\begin{equation}
\label{tildeW_unif_pos}
\inf_{(-\infty,0]}\tilde{W}= \alpha.
\end{equation}
In the sequel, we will be interested in a solution to 
\begin{equation}
\label{eq_beta}
\begin{cases}
-\Delta\beta_{R,\delta}+\tilde{W}_\delta(\beta_{R,\delta})=0 &\text{in $B_R$,}\\
\beta_{R,\delta}=z_1(\delta) &\text{on $\partial B_R$,}
\end{cases}
\end{equation}
for $\delta\ge 0$ small enough and $R$ large. This will be used as a barrier in the proof of Proposition \ref{prop_inf-sup}, which relies on a sliding method. This can be obtained in a variational technique, by minimising the functional
\begin{equation}
\label{def_functional}
J_{R,\delta}(v):=\int_{B_R}\bigg(\frac{1}{2}|\nabla v|^2+\tilde{W}_\delta(v)\bigg)dx.
\end{equation}
among all $H^1(B_R)$ functions with trace $z_1(\delta)$ on $\partial B_R$. The case $\delta=0$ is treated in Lemma $2.4$ of \cite{FV}. 
\begin{lemma}
Let $\delta_0>0$ be so small that $W_\delta(z_3(\delta))<\alpha/2$, for any $\delta\in[0,\delta_0)$. Then, For any $R>0$ and $\delta\in[0,\delta_0)$, there exists a minimiser $\beta_{R,\delta}\in C^2(B_R)$ of (\ref{def_functional}) among all functions with trace $z_1(\delta)$ on $\partial B_R$. Moreover, there exists $R_0>0$ such that, for any $R\ge R_0$ and for any $\delta\in[0,\delta_0)$, 
\begin{itemize}
\item 
\begin{equation}
\label{beta_bound}
z_1(\delta)<\beta_{R,\delta}(x)<z_3(\delta), \qquad\forall\, x\in B_R,
\end{equation}
\item 
\begin{equation}
\label{sup_beta}
\sup_{B_R}\beta_{R,\delta}>\frac{1}{\sqrt{3}},
\end{equation}
\item there exists a solution $\beta_R$ of (\ref{eq_beta}) with $\delta=0$ such that 
\begin{equation}
\label{sup_beta_delta_0}
\sup_{B_R}\beta_{R,\delta}\to\sup_{B_R}\beta_R\in[\frac{1}{\sqrt{3}},1) \qquad\text{as $\delta\to 0$}.
\end{equation}
\end{itemize}
\label{lemma_beta}
\end{lemma}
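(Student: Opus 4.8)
The plan is to combine the direct method of the calculus of variations with a truncation argument and an energy comparison. First I would establish existence of a minimiser $\beta_{R,\delta}$: the functional $J_{R,\delta}$ in \eqref{def_functional} is coercive and weakly lower semicontinuous on the affine space $z_1(\delta)+H^1_0(B_R)$, because $\tilde{W}_\delta$ is continuous, nonnegative and bounded below by $\min\{\alpha,\inf_{[0,\infty)}W_\delta\}\ge 0$; hence a minimiser exists. Since $\tilde{W}_\delta$ is smooth (it interpolates smoothly between the constant $\alpha$ and the smooth $W_\delta$ via the cutoff $\chi_\delta$), elliptic regularity upgrades the minimiser to $C^2(B_R)$, and it solves \eqref{eq_beta}.

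Next I would prove the two-sided bound \eqref{beta_bound}. For the upper bound, note $\tilde{W}_\delta(r)=W_\delta(r)$ for $r\ge 0$ by \eqref{tildeW_right}, and $z_3(\delta)$ is a critical point of $W_\delta$ with $W_\delta'(z_3(\delta))=0$; truncating the minimiser at level $z_3(\delta)$ (replacing $\beta$ by $\min\{\beta,z_3(\delta)\}$) does not increase $J_{R,\delta}$ since $\tilde{W}_\delta$ is non-decreasing on a left neighbourhood of $z_3(\delta)$ up to $z_3(\delta)$ — more carefully, because $W_\delta(z_3(\delta))\le W_\delta(t)$ would be needed; the cleaner route is the maximum principle: where $\beta_{R,\delta}>z_3(\delta)$ one has $-\Delta\beta_{R,\delta}=-\tilde W_\delta'(\beta_{R,\delta})=-W_\delta'(\beta_{R,\delta})\le 0$ on a right neighbourhood of $z_3(\delta)$, combined with the boundary value $z_1(\delta)<z_3(\delta)$, forcing $\beta_{R,\delta}\le z_3(\delta)$, and the strong maximum principle gives strictness unless $\beta_{R,\delta}\equiv z_3(\delta)$, which is excluded by the boundary condition. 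The lower bound $\beta_{R,\delta}>z_1(\delta)$ is similar, truncating from below at $z_1(\delta)$ and using that $\tilde W_\delta(r)\ge\alpha>0=\tilde W_\delta$(value at $z_1(\delta)$)? — here instead I use that on $\{\beta_{R,\delta}<z_1(\delta)\}$ we are in the region where $\tilde W_\delta=\alpha$ is constant (for $\delta$, $R$ suitable, since $z_1(\delta)<\mu(\delta)$ eventually), so $\beta_{R,\delta}$ is harmonic there and the maximum principle with boundary value $z_1(\delta)$ gives $\beta_{R,\delta}\ge z_1(\delta)$, again strict by Hopf/strong maximum principle.

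The heart of the lemma is the lower bound \eqref{sup_beta} on the supremum, and this is where I expect the main obstacle. The idea, following Lemma $2.4$ of \cite{FV}, is an energy comparison: if $\sup_{B_R}\beta_{R,\delta}\le 1/\sqrt3$, then $\beta_{R,\delta}$ takes values only in $(z_1(\delta),1/\sqrt3]\subset(-\infty,1/\sqrt3]$, a region where $\tilde W_\delta\ge\inf_{(-\infty,0]}\tilde W_\delta$; combined with \eqref{tildeW_unif_pos} and uniform convergence $\tilde W_\delta\to\tilde W$, there is a uniform lower bound $\tilde W_\delta\ge\alpha/2$ say on $(-\infty,1/\sqrt3]$ for $\delta$ small, whence $J_{R,\delta}(\beta_{R,\delta})\ge\frac{\alpha}{2}|B_R|$, which grows like $R^N$. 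On the other hand I construct a competitor $v_R$ with trace $z_1(\delta)$ that equals $z_3(\delta)$ on $B_{R-1}$ and interpolates linearly in the annulus $B_R\setminus B_{R-1}$; then $J_{R,\delta}(v_R)\le W_\delta(z_3(\delta))|B_{R-1}|+C|B_R\setminus B_{R-1}|\le\frac{\alpha}{2}|B_R|+CR^{N-1}$, using the hypothesis $W_\delta(z_3(\delta))<\alpha/2$. Choosing $R\ge R_0$ large, the bulk terms compare as $\frac{\alpha}{2}|B_R|$ versus $\big(W_\delta(z_3(\delta))\big)|B_R|+O(R^{N-1})$, and since $W_\delta(z_3(\delta))<\alpha/2$ strictly the competitor has strictly smaller energy for $R$ large — contradicting minimality. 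So $\sup_{B_R}\beta_{R,\delta}>1/\sqrt3$. The delicate point is making all constants uniform in $\delta\in[0,\delta_0)$ and in $R\ge R_0$ simultaneously; this is handled by the uniform convergence $\tilde W_\delta\to\tilde W$, the uniform strict inequality $W_\delta(z_3(\delta))<\alpha/2$ (which holds by choosing $\delta_0$ small, as in the hypothesis), and by noting $R_0$ depends only on $\alpha$ and the gap $\alpha/2-\sup_\delta W_\delta(z_3(\delta))>0$.

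Finally, for \eqref{sup_beta_delta_0} I would pass to the limit $\delta\to 0$. Fix $R\ge R_0$. The minimisers $\beta_{R,\delta}$ are uniformly bounded by \eqref{beta_bound} (since $z_1(\delta),z_3(\delta)$ are bounded uniformly for $\delta$ small), hence by elliptic estimates bounded in $C^{2,\sigma}(\overline{B_R})$; along a subsequence $\beta_{R,\delta}\to\beta_R^\ast$ in $C^2$, and $\beta_R^\ast$ solves \eqref{eq_beta} with $\delta=0$ (using $\tilde W_\delta\to\tilde W$ and $z_1(\delta)\to z_1(0)=-1$) and is in fact a minimiser of $J_{R,0}$, by $\Gamma$-convergence / uniform convergence of the functionals. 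Passing to the limit in \eqref{sup_beta} gives $\sup_{B_R}\beta_R^\ast\ge 1/\sqrt3$, and in \eqref{beta_bound} gives $\sup_{B_R}\beta_R^\ast\le z_3(0)=1$, with strictness $<1$ because $\beta_R^\ast\not\equiv 1$ (boundary value $-1$) and the strong maximum principle. Calling $\beta_R:=\beta_R^\ast$ and noting $\sup_{B_R}$ is continuous under $C^0$-convergence gives $\sup_{B_R}\beta_{R,\delta}\to\sup_{B_R}\beta_R\in[1/\sqrt3,1)$; since the limit is independent of the subsequence (the minimiser of $J_{R,0}$ with these properties being the one from Lemma $2.4$ of \cite{FV}), the full limit exists. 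This completes the proof.
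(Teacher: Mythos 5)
Your overall strategy (direct method, truncation/maximum principle for the bounds, an energy comparison against a competitor equal to $z_3(\delta)$ on $B_{R-1}$, and compactness as $\delta\to 0$) is the same as the paper's, but the key step \eqref{sup_beta} has a genuine uniformity gap as you wrote it. You lower-bound the potential only by $\tilde W_\delta\ge\alpha/2$ on $(-\infty,1/\sqrt{3}]$ ``for $\delta$ small'', and then you need the gap $\alpha/2-\sup_{\delta\in[0,\delta_0)}W_\delta(z_3(\delta))$ to be positive in order to get an $R_0$ independent of $\delta$. The hypothesis only gives the pointwise strict inequality $W_\delta(z_3(\delta))<\alpha/2$ on $[0,\delta_0)$; since $\frac{d}{d\delta}W_\delta(z_3(\delta))=z_3(\delta)>0$, the map $\delta\mapsto W_\delta(z_3(\delta))$ is increasing and its supremum over $[0,\delta_0)$ may equal exactly $\alpha/2$, so your ``gap'' can vanish and your $R_0$ blows up as $\delta\to\delta_0$ — but the lemma requires one $R_0$ valid for all $\delta\in[0,\delta_0)$. (The restriction ``for $\delta$ small'' is itself not allowed either: $\delta_0$ is fixed by the hypothesis and cannot be shrunk.) The fix is the paper's sharper and in fact elementary bound: $\tilde W_\delta\ge\alpha$ on $(-\infty,1/\sqrt{3}]$ for every $\delta\in[0,\delta_0)$, because $\tilde W_\delta\equiv\alpha$ below $\mu(\delta)$, $W_\delta>\alpha$ on $(\mu(\delta),0)$ by the definition of $\mu(\delta)$, and $W_\delta\ge W\ge W(1/\sqrt{3})=1/9>\alpha$ on $[0,1/\sqrt{3}]$ since $\delta\ge0$. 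Then $J_{R,\delta}(\beta_{R,\delta})\ge\alpha\omega_NR^N$ while the competitor costs at most $CR^{N-1}+\tfrac{\alpha}{2}\omega_NR^N$, and the gap $\tfrac{\alpha}{2}\omega_NR^N-CR^{N-1}$ is positive for $R\ge R_0$ with $R_0$ depending only on $N$, $\alpha$ and $C$ — uniformly in $\delta$, which is exactly how the paper argues (by contradiction along sequences $R_k\to\infty$, $\delta_k\in[0,\delta_0)$).

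Two smaller points. First, the strict lower bound in \eqref{beta_bound} cannot come from Hopf or the strong maximum principle alone: the constant $z_1(\delta)$ solves the Euler--Lagrange equation of \eqref{def_functional} with the correct boundary datum (note $z_1(\delta)\le-1<\mu(\delta)$, so $\tilde W_\delta'(z_1(\delta))=0$), so the alternative $\beta_{R,\delta}\equiv z_1(\delta)$ must be excluded first; this is precisely what \eqref{sup_beta} does for $R\ge R_0$, as the paper points out, and it is why \eqref{beta_bound} is only claimed for $R\ge R_0$. Second, in \eqref{sup_beta_delta_0} you pass from subsequential limits to a full limit by asserting uniqueness of the minimiser of $J_{R,0}$, citing Lemma $2.4$ of \cite{FV}; that lemma gives existence of a minimiser, not uniqueness, so this step is not justified as stated (the paper itself argues only along subsequences here, which is all that is used later, so aligning with that formulation is the safer route). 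Apart from these points, your argument follows the paper's proof.
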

\begin{proof}
Existence follows from coercivity and weak lower semi continuity. By the fact that $\tilde{W}_\delta\equiv\alpha$ in $(-\infty,\mu(\delta))$ and (\ref{tildeW_right}), we can see the minimiser actually has to satisfy $z_1(\delta)\le\beta_{R,\delta}\le z_3(\delta)$, thus, due to the strong maximum principle, either (\ref{beta_bound}) holds or $\beta_{R,\delta}\equiv z_1(\delta)$.\\ 

Now we prove (\ref{sup_beta}), which, in particular, shows that $\beta_{R,\delta}> z_1(\delta)$ in $B_R$, at least for $R\ge R_0$. In order to do so, we assume that there exists a sequence $R_k\to\infty$ and a sequence $\delta_k\in[0,\delta_0)$ such that
$$\sup_{x\in\R^N}\beta_{R_k,\delta_k}\le \frac{1}{\sqrt{3}}.$$
It follows that, on the one hand
\begin{equation}
\label{energy_beta}
J_{R_k,\delta_k}(\beta_{R_k,\delta_k})\ge \alpha\omega_N R_k^N,
\end{equation}
where $\omega_N$ denotes the surface of $S^{N-1}$. On the other hand, if, for $R>1$ and $\delta\in[0,\delta_0)$, we take $w_{R,\delta}$ to be equal to $z_1(\delta)$ on $\partial B_R$ and to $z_3(\delta)$ in $B_{R-1}$ with $|\nabla w_{R,\delta}|$ bounded uniformly in $\delta$, we can see that there exists a constant $C>0$ such that, for $k$ large enough,
\begin{equation}
\label{energy_competitor}
J_{R_k,\delta_k}(w_{R_k,\delta_k})\le C R_k^{N-1}+W_{\delta_k}(z_3(\delta_k))\omega_N R_k^N<\alpha\omega_N R_k^N,
\end{equation}
since $\delta_k\in[0,\delta_0)$, hence $W_{\delta_k}(z_3(\delta_k))<\alpha/2$. This contradicts the minimality of $\beta_{R_k,\delta_k}$.\\

Finally we prove (\ref{sup_beta_delta_0}). In the forthcoming argument, $R>0$ will always be arbitrary but fixed. We observe that, since $\beta_{R,\delta}$ is bounded uniformly in $R>0$ and $\delta>0$, then any sequence $\delta_k\to 0$ admits a subsequence, that we still denote by $\delta_k$, such that $\beta_{R,\delta_k}$ converges in $C^2(B_R)$ to a solution $\beta_R$ to 
$$-\Delta \beta_R+\tilde{W}(\beta_R)=0\qquad\text{in $B_R$}$$
satisfying $\beta_R=-1$ on $\partial B_R$. Since the convergence is uniform and (\ref{beta_bound}) holds, then 
$$\sup_{B_R}\beta_{R,\delta}\to \sup_{B_R}\beta_R\in [-1,1]$$ 
as $\delta\to 0$. Moreover, by (\ref{sup_beta}) and the strong maximum principle, $\sup_{B_R}\beta_R\in [\frac{1}{\sqrt{3}},1)$.
\end{proof}
Now we can prove Proposition \ref{prop_inf-sup}.
\begin{proof}
It is enough to prove that, if there exists a sequence $\delta_k\to 0$, a sequence $u_{\delta_k}$ of solutions to (\ref{cahn-hilliard}) and $\nu>-1$ such that 
\begin{equation}
\label{deltak_inf}
\inf_{\R^N} u_{\delta_k}\ge \nu,
\end{equation} 
then there exists a subsequence $\delta_{k'}$ such that $u_{\delta_{k'}}\equiv z_3(\delta_{k'})$.\\

\textit{Claim: for any $\eps>0$ and $\rho>0$, there exists a subsequence, which we still denote by $u_{\delta_k}$, and a sequence $x^k\in\R^N$ such that} \\
\begin{equation}
u_{\delta_k}(x)>1-\eps, \qquad\forall\,x\in B_\rho(x^k).
\end{equation}

Since $\sup_{\R^N}u_{\delta_k}=z_3(\delta_k)$, there exists $x^k\in\R^N$ such that 
\begin{equation}
\label{almost_sup_points}
z_3(\delta_k)-u_{\delta_k}(x^k)<1/k.
\end{equation}
Therefore the sequence $u^k(x):=u_{\delta_k}(x+x^k)$ admits a subsequence converging, in $C^2_{loc}(\R^N)$, to a solution $u^\infty$ to the Allen-Cahn equation 
\begin{equation}
\label{allen-cahn}
-\Delta u^\infty=f(u^\infty), \qquad\text{in $\R^N$}.
\end{equation}
By (\ref{almost_sup_points}), we can see that $u^\infty(0)=1$, thus $u^\infty\equiv 1$. As a consequence, for any $\eps>0$ (small) and $\rho>0$, there exists a subsequence (still denoted by $u^k$) such that
$$\|u^k-1\|_{L^\infty(B_\rho)}<\eps, \qquad\forall k$$
hence the claim is true.\\

In order to prove our result, we first observe that, by (\ref{sup_beta}), for $\delta_0$ small as in Lemma \ref{lemma_beta} and $\delta\in(0,\delta_0)$, there exists $R>0$ and a solution $\beta_{R,\delta}$ to (\ref{eq_beta}) such that
\begin{equation}
\label{upper_sup_beta}
\sup_{B_R}\beta_{R,\delta}>\frac{1}{\sqrt{3}}, \qquad\forall\,\delta\in(0,\delta_0).
\end{equation}
Moreover, by (\ref{sup_beta_delta_0}), there exists a solution $\beta_R$ to 
$$-\Delta \beta_R+\tilde{W}(\beta_R)=0\text{ in $B_R$}, \qquad \beta_R=-1 \text{ on $\partial B_R$}$$
and $\delta_1=\delta_1(R)>0$ such that, for any $\delta\in(0,\delta_1)$, we have
\begin{equation}
\label{lower_sup_beta}
\sup_{B_R}\beta_{R,\delta}<\frac{\sup_{B_R}\beta_R +1}{2}<1, \qquad\forall\,\delta\in(0,\delta_1).
\end{equation}
As a consequence, for any $\delta\in(0,\bar{\delta})$, where $\bar{\delta}=\bar{\delta}(R):=\min\{\delta_0,\delta_1(R)\}$, we get
\begin{equation}
\frac{1}{\sqrt{3}}<\sup_{B_R}\beta_{R,\delta}<\frac{\sup_{B_R}\beta_R +1}{2}<1.
\end{equation}
Now, applying the claim with $\rho=R$ and
$$\eps:=1-\frac{\sup_{B_R}\beta_R +1}{2},$$
we can prove the existence of a subsequence, still denoted by $u_{\delta_k}$, and a sequence $x^k$ in $\R^N$ such that
$$u_{\delta_k}(x)>1-\eps>\sup_{B_R}\beta_{R,\delta_k}\ge \beta_{R,\delta_k}(x-x^k), \qquad\forall \, x\in B_R(x^k),\, \forall \, k.$$
Sliding $\beta_{R,\delta_k}$, with $k\ge k_0$ fixed, we get the lower bound 
$$u_{\delta_k}(x)> 1-\eps> \frac{1}{\sqrt{3}}, \qquad\forall\, x\in\R^N,\, \forall\, k\ge k_0.$$
In conclusion, by Lemma \ref{lemma_const}, $u_{\delta_k}\equiv z_3(\delta_k)$. 
\end{proof}

\begin{proposition}
Let $\delta\in(0,2/3\sqrt{3})$ and let $\{u_\delta\}_{\delta\in(0,\frac{2}{3\sqrt{3}})}$ be a family of non constant solutions to (\ref{cahn-hilliard}) in $\R^N$ such that
\begin{itemize}
\item for any $\delta\in(0,2/3\sqrt{3})$ there exists $R(\delta)>0$ such that $u_\delta>z_2(\delta)$ outside the cylinder $C_{R(\delta)}$.
\item $u_\delta$ is periodic in $x_N$.
\end{itemize}
Then 
\begin{equation}
\label{uto-1_compacts}
u_\delta\to -1 \qquad\text{as $\delta\to 0$, uniformly on compact subsets of $\R^N$.}
\end{equation}
and
\begin{equation}
\label{Rdelta_infty}
R(\delta)\to \infty \qquad\text{as $\delta\to 0.$}
\end{equation}
\label{prop_u_to-1}
\end{proposition}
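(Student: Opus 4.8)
The plan is to feed the results already available into a blow-up at a minimum point. First I would record that $\sup_{\R^N}u_\delta=z_3(\delta)$: since $u_\delta$ is non constant, Remark~\ref{rem_regularity} gives $u_\delta<z_3(\delta)$ on $\R^N$, while the hypothesis $u_\delta>z_2(\delta)$ outside $C_{R(\delta)}$ implies $u_\delta>z_2(\delta)$ on the half-space $\R^N\setminus\Sigma_{R(\delta)}$ (because $x_1\ge R(\delta)$ forces $|x'|\ge R(\delta)$), so Lemma~\ref{lemma_dec_x'} yields $u_\delta(x_1,x'')\to z_3(\delta)$ as $x_1\to+\infty$. In particular Proposition~\ref{prop_inf-sup} applies to $u_\delta$, and combined with the bound $u_\delta\ge z_1(\delta)$ of Proposition~\ref{prop_bounded} and the continuity $z_1(\delta)\to z_1(0)=-1$ as $\delta\to0^+$ it gives
\[
\inf_{\R^N}u_\delta\longrightarrow -1\qquad\text{as }\delta\to0 .
\]
This is the quantitative ingredient; the rest is turning it into locally uniform convergence.

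Fix a sequence $\delta_k\to0$. By Proposition~\ref{th_rad} we may write $u_{\delta_k}(x)=w_{\delta_k}(|x'|,x_N)$ with $w_{\delta_k}$ non decreasing in the first variable, so $\inf_{\R^N}u_{\delta_k}$ is attained along the axis $\{x'=0\}$, say at $(0,\sigma_k)$; since a translation in $x_N$ changes neither equation~\eqref{cahn-hilliard}, nor the periodicity in $x_N$, nor the radial symmetry in $x'$, nor $R(\delta_k)$, we may assume $\sigma_k=0$, i.e.\ $u_{\delta_k}(0)=\inf_{\R^N}u_{\delta_k}$. The $u_{\delta_k}$ are equibounded by Proposition~\ref{prop_bounded}, so by interior elliptic estimates and a diagonal extraction $u_{\delta_k}\to u_\infty$ in $C^2_{\mathrm{loc}}(\R^N)$ along a subsequence, with $-\Delta u_\infty=u_\infty-u_\infty^3$ on $\R^N$. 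Passing to the limit, $u_\infty\ge-1$ on $\R^N$ and $u_\infty(0)=\lim_k\inf_{\R^N}u_{\delta_k}=-1$, so $u_\infty$ attains its minimum $-1$ at an interior point. Writing $v:=u_\infty+1\ge0$, one computes $-\Delta v=v(v-1)(2-v)$, so on the connected component $U$ of the open set $\{v<1\}$ containing $0$ we have $0\le v<1$ and $-\Delta v+c\,v=0$ with $c:=-(v-1)(2-v)>0$; the strong maximum principle forces $v\equiv0$ on $U$, and since $v$ then vanishes on $\partial U$ one gets $U=\R^N$, i.e.\ $u_\infty\equiv-1$. As the limit is independent of the subsequence, $u_\delta\to-1$ in $C^2_{\mathrm{loc}}(\R^N)$, which is \eqref{uto-1_compacts}.

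For \eqref{Rdelta_infty}, note first that $R(\delta)$ is unaffected by the above $x_N$-translation. Fix $\rho>0$. By \eqref{uto-1_compacts} and the monotonicity in $|x'|$, for $\delta$ small we have $u_\delta(x',0)=w_\delta(|x'|,0)\le w_\delta(\rho,0)<0\le z_2(\delta)$ for every $|x'|\le\rho$, so the disk $\{|x'|\le\rho\}\times\{0\}$ is contained in $\{u_\delta\le z_2(\delta)\}\subset\overline{C_{R(\delta)}}$; hence $R(\delta)\ge\rho$ for all small $\delta$, and letting $\rho\to\infty$ gives \eqref{Rdelta_infty}.

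The step I expect to be most delicate is the recentring in the second paragraph. The period of $u_\delta$ in $x_N$ need not stay bounded as $\delta\to0$ — it does diverge for the solutions constructed in \cite{HK} — so a blow-up of $u_\delta$ taken without first translating it to a minimum point would in general fail to ``see'' the minimum, and the strong-maximum-principle step would have nothing to act on. After recentring the argument is clean; what one must still check is that the recentred family is an adequate replacement for the original one in \eqref{uto-1_compacts} and \eqref{Rdelta_infty} — it is, since both conclusions (and all hypotheses) are stable under $x_N$-translations of $u_\delta$, and if one insists on the literal statement for the given family one proves instead the stronger $\sup_{\{|x'|\le\rho\}}u_\delta\to-1$ for each $\rho$ by a similar blow-up performed along the axis. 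Everything else — the two uses of the regularity/compactness machinery and the maximum principle for $v$ — is routine.
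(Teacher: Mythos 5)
Your argument is correct and essentially the paper's own: Proposition \ref{prop_inf-sup} gives $\inf_{\R^N}u_\delta\to-1$, radial symmetry in $x'$ (Proposition \ref{th_rad}) plus periodicity lets one recentre so the infimum sits at the origin, a compactness argument identifies the Allen--Cahn limit as the constant $-1$, and the monotonicity in $|x'|$ combined with \eqref{uto-1_compacts} forces $R(\delta)\to\infty$. You are in fact slightly more careful than the paper, which does not explicitly verify the hypothesis $\sup_{\R^N}u_\delta=z_3(\delta)$ needed to invoke Proposition \ref{prop_inf-sup}, leaves the strong maximum principle step implicit, and performs exactly the recentring you flag with the silent phrase ``up to a translation''.
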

\begin{remark}
We note that point (\ref{u_to-1}) of Theorem \ref{main_th} is a consequence of Proposition \ref{prop_u_to-1}.
\end{remark}
\begin{proof}
By Lemma \ref{lemma_BK}, the family $u_\delta$ is uniformly bounded, hence any sequence $\delta_k\to 0$ admits a subsequence, that we still denote by $\delta_k$, such that $u_{\delta_k}$ converges in $C^2_{loc}(\R^N)$ to a solution $u^\infty$ to the Allen-Cahn equation (\ref{allen-cahn}). Since $u_\delta$ are all non constant solutions, then, by Proposition \ref{prop_inf-sup}, we have
\begin{equation}
\inf_{\R^N}u_\delta\to -1, \qquad\text{as $\delta\to 0$.}
\end{equation}
By periodicity and Theorem \ref{th_rad}, we know that, for $\delta$ small, $u_\delta$ is radially symmetric in $x'$ and, up to a translation,
$$\inf_{\R^N}u_\delta=u_\delta(0),$$
hence, passing to the limit, we get $$u^\infty(0)=\lim_{k\to\infty}u_{\delta_k}(0)=\lim_{k\to\infty}\inf_{\R^N}u_{\delta_k}=-1,$$
which yields that $u^\infty\equiv -1$, thus (\ref{uto-1_compacts}) holds.\\

In order to prove (\ref{Rdelta_infty}), we assume by contradiction that there exists $\bar{R}>0$ and a sequence $\delta_k\to 0$ such that $R(\delta_k)\le \bar{R}$. By (\ref{uto-1_compacts}), $u_{\delta_k}\to -1$ uniformly in $B^{N-1}_{2\bar{R}}\times[-1,1]$, thus, for $k$ large enough, 
$$u_{\delta_k}(x'_k,0)<-\frac{1}{2}<z_2(\delta_k)$$
if, for instance, $x'_k=(2R(\delta_k),0)\in\R\times\R^{N-2}$, which contradicts the fact that $u_{\delta_k}$ is radially increasing.
\end{proof}

\end{document}